\newtheorem{theorem}{Theorem}
\newtheorem{proposition}[theorem]{Proposition}
\newtheorem{conjecture}[theorem]{Conjecture}
\newtheorem{remark}[theorem]{Remark}
\newcommand{\tto}{\twoheadrightarrow}
\begin{document}
\title[Combinatorics of monoidal actions]
{Combinatorics of monoidal actions\\ in Lie-algebraic context}

\author[V.~Mazorchuk]{Volodymyr Mazorchuk}
\address{Department of Mathematics\\ 
Uppsala University\\ 
Box. 480\\
SE-75106\\ 
Uppsala\\ 
SWEDEN}
\email{mazor\symbol{64}math.uu.se}

\author[X.~Zhu]{Xiaoyu Zhu}
\address{School of Mathematics and Statistics\\
Ningbo University\\ 
Ningbo, Zhejiang\\ 
315211\\
P.R. CHINA}
\email{zhuxiaoyu1\symbol{64}nbu.edu.cn}

\begin{abstract}
This paper is, essentially, a survey related to the problem of
understanding the combinatorics of the action of the 
monoidal category of finite dimensional modules over a
simple finite dimensional Lie algebra on various
categories of Lie algebra modules. A special attention is payed
to the Lie algebras $\mathfrak{sl}_2$ and $\mathfrak{sl}_3$.
A few new general results are collected at the end. 
\end{abstract}

\maketitle

\section{Introduction}\label{s1}

Monoidal categories and their actions are important mathematical 
concepts that found a wide range of applications to various 
areas of modern mathematics and theoretical physics, especially
in representation theory, quantum mechanics, topological quantum 
field theory and several further areas that actively use the 
general idea of categorification, see, for example \cite{Co,CP,EGNO}
and references therein.

Lie groups, Lie algebras and their module categories serve as
a rich source of various types of monoidal categories. The most
prominent of these is the category of finite dimensional 
modules over a Lie group or a Lie algebra. As in 
this paper we will be mostly focusing on the setup of Lie algebras,
let $\mathfrak{g}$ be a Lie algebra over some field $\Bbbk$.
Then the universal enveloping algebra $U(\mathfrak{g})$
is, naturally, a Hopf algebra, which endows the category
$\mathscr{C}:=\mathfrak{g}\text{-}\mathrm{fdmod}$ of all finite dimensional
$\mathfrak{g}$-modules with the natural structure of a
(symmetric) monoidal category. 

The monoidal category $\mathscr{C}$ acts, in the obvious way, 
on the category  $\mathfrak{g}\text{-}\mathrm{Mod}$ of all
$\mathfrak{g}$-modules and this action restricts to many 
natural subcategories of $\mathfrak{g}\text{-}\mathrm{Mod}$.
From the point of view of representation theory, it is natural 
to pose the problem of classification, up to equivalence, of
such actions. This problem is the main motivation for most
of the results presented in this paper. Taking literally, the
problem seems too hard. However, it contains many interesting 
subproblems and special cases.

The case of the Lie algebra $\mathfrak{sl}_2$ is a natural 
starting point. Here it turns out that the combinatorics of
``simple'' actions of the category $\mathfrak{sl}_2\text{-}\mathrm{fdmod}$
on various categories of Lie algebra modules 
is described by infinite Dynkin diagrams. This combinatorics is
only a rough invariant, it does not identify an action of 
$\mathfrak{sl}_2\text{-}\mathrm{fdmod}$, up to equivalence.
In Section~\ref{s3} of the present paper we survey the results of \cite{MZ1}
where the action of $\mathfrak{sl}_2\text{-}\mathrm{fdmod}$
in various Lie-algebraic contexts was studied in detail.

The next natural step is the Lie algebra $\mathfrak{sl}_3$,
where the situation is much more complicated. Nevertheless, the
combinatorics of  ``simple'' actions of the category 
$\mathfrak{sl}_3\text{-}\mathrm{fdmod}$ on those categories of 
$\mathfrak{sl}_3$-modules which are ``generated'' by a simple 
(but not necessarily
finite dimensional) $\mathfrak{sl}_3$-module can be classified in
terms of eight ``two-dimensional'' graphs. This is done in 
\cite{MZ2} and surveyed in Section~\ref{s4}.

In Section~\ref{s5} we outline the setup for similar questions
in the general case, collect a few starting general results and
describe our expectations.

Section~\ref{s2} serves as a motivating introduction. It 
recaps the classical results on Dynkin diagrams and related
classifications, including the $ADE$-type classifications that
use simply laced Dynkin diagrams.

\vspace{5mm}

{\bf Acknowledgements.}

The first author is partially supported by the Swedish Research Council, 
Grant No. 2021-03731. The second author is partially supported by the 
Zhejiang Provincial Natural Science Foundation of China, Grant No. LQN25A010023.
Most of the results in this paper were presented at the 
XLII Workshop on Geometric Methods in Physics, which took place in Bia{\l}ystok
in July 2025. We thank the organizers of the Workshop for this
opportunity.
\vspace{5mm}

\section{Dynkin diagrams and various classifications}\label{s2}

\subsection{Dynkin diagrams}\label{s2.1}

Recall the following list of {\em Dynkin diagrams}:

\resizebox{5cm}{!}{
$
\xymatrix{
A_n:&&\bullet\ar@{-}[r]&\bullet\ar@{-}[r]&\bullet\ar@{-}[r]&\dots\ar@{-}[r]&\bullet
}\qquad\qquad\qquad
$
}
\resizebox{5cm}{!}{
$
\xymatrix{
B_n:&&\bullet\ar@/_2mm/@{-}[r]&\bullet\ar@{-}[r]\ar@/_2mm/[l]&\bullet\ar@{-}[r]&\dots\ar@{-}[r]&\bullet
}
$
}

\resizebox{5cm}{!}{
$
\xymatrix{
C_n:&&\bullet\ar@/_2mm/@{-}[r]\ar@/^2mm/[r]&\bullet\ar@{-}[r]&\bullet\ar@{-}[r]&\dots\ar@{-}[r]&\bullet
}\qquad\qquad\qquad
$
}
\resizebox{5cm}{!}{
$
\xymatrix@R=4mm{
D_n:&&\bullet\ar@{-}[r]&\bullet\ar@{-}[r]\ar@{-}[d]&\bullet\ar@{-}[r]&\dots\ar@{-}[r]&\bullet\\
&&&\bullet&&&
}
$
}

\resizebox{5cm}{!}{
$
\xymatrix@R=4mm{
E_6:&&\bullet\ar@{-}[r]&\bullet\ar@{-}[r]&\bullet\ar@{-}[r]\ar@{-}[d]&
\bullet\ar@{-}[r]&\bullet\\
&&&&\bullet&&
}\qquad\qquad\qquad
$
}
\resizebox{5cm}{!}{
$
\xymatrix@R=4mm{
E_7:&&\bullet\ar@{-}[r]&\bullet\ar@{-}[r]&\bullet\ar@{-}[r]\ar@{-}[d]&
\bullet\ar@{-}[r]&\bullet\ar@{-}[r]&\bullet\\
&&&&\bullet&&
}
$
}

\resizebox{5cm}{!}{
$
\xymatrix@R=4mm{
E_8:&&\bullet\ar@{-}[r]&\bullet\ar@{-}[r]&\bullet\ar@{-}[r]\ar@{-}[d]&
\bullet\ar@{-}[r]&\bullet\ar@{-}[r]&\bullet\ar@{-}[r]&\bullet\\
&&&&\bullet&&
}\qquad\qquad\qquad
$
}
\resizebox{4cm}{!}{
$
\xymatrix@R=4mm{
F_4:&&\bullet\ar@{-}[r]&\bullet\ar@{-}@/_2mm/[r]\ar@/^2mm/[r]&\bullet\ar@{-}[r]&\bullet
}
$
}

\resizebox{2cm}{!}{
$
\xymatrix@R=4mm{
G_2:&\bullet\ar@{-}[r]\ar@/_2mm/[r]\ar@/^2mm/[r]&\bullet
}
$
}

The number of vertices of a diagram is called the {\rm rank}
and is used as the subscript in the notation.
The diagrams $A_n$, $D_n$ and $E_6$, $E_7$ and $E_8$, that is,
those Dynkin diagrams that do not have any multiple edges,
are called {\em simply laced}.

Recall also the following list of {\em affine Dynkin diagrams}:

\resizebox{6cm}{!}{
$
\xymatrix@R=4mm{
\widetilde{A}_n:&&\bullet\ar@{-}[r]&\bullet\ar@{-}[r]&
\bullet\ar@{-}[r]&\dots\ar@{-}[r]&\bullet\ar@{-}[r]&\bullet\\
&&&\bullet\ar@{-}[lu]\ar@{-}[rrrru]&&&
}\qquad\qquad\qquad
$
}
\resizebox{3cm}{!}{
$
\xymatrix@R=4mm{
\widetilde{A}_{11}:&&
\bullet\ar@{-}[r]\ar@/_2mm/[r]\ar@/^2mm/[r]\ar@/^5mm/[r]&\bullet
}\qquad\qquad\qquad
$
}
\resizebox{3cm}{!}{
$
\xymatrix@R=4mm{
\widetilde{A}_{12}:&&\bullet\ar@/_2mm/@{-}[r]\ar@/^2mm/@{-}[r]&\bullet
}\qquad\qquad\qquad
$
}

\resizebox{6cm}{!}{
$
\xymatrix@R=4mm{
\widetilde{B}_n:&&\bullet\ar@/_2mm/@{-}[r]&
\bullet\ar@{-}[r]\ar@/_2mm/[l]&\bullet\ar@{-}[r]&\dots\ar@{-}[r]&
\bullet\ar@/_2mm/@{-}[r]\ar@/^2mm/[r]&\bullet
}\qquad\qquad\qquad
$
}
\resizebox{6cm}{!}{
$
\xymatrix@R=4mm{
\widetilde{BC}_n:&&\bullet\ar@/_2mm/@{-}[r]&
\bullet\ar@{-}[r]\ar@/_2mm/[l]&\bullet\ar@{-}[r]&
\dots\ar@{-}[r]&\bullet\ar@/_2mm/@{-}[r]&\bullet\ar@/_2mm/[l]
}\qquad\qquad\qquad
$
}

\resizebox{6cm}{!}{
$
\xymatrix@R=4mm{
\widetilde{C}_n:&&\bullet\ar@/_2mm/@{-}[r]\ar@/^2mm/[r]&
\bullet\ar@{-}[r]&\bullet\ar@{-}[r]&\dots\ar@{-}[r]&
\bullet\ar@/_2mm/@{-}[r]&\bullet\ar@/_2mm/[l]
}\qquad\qquad\qquad
$
}
\resizebox{6cm}{!}{
$
\xymatrix@R=4mm{
\widetilde{BD}_n:&&\bullet\ar@{-}[r]&
\bullet\ar@{-}[r]\ar@{-}[d]&\bullet\ar@{-}[r]&
\dots\ar@{-}[r]&\bullet\ar@/_2mm/@{-}[r]\ar@/^2mm/[r]&\bullet\\
&&&\bullet&&&&&&
}\qquad\qquad\qquad
$
}

\resizebox{6cm}{!}{
$
\xymatrix@R=4mm{
\widetilde{D}_n:&&\bullet\ar@{-}[r]&
\bullet\ar@{-}[r]\ar@{-}[d]&\bullet\ar@{-}[r]&\dots\ar@{-}[r]&
\bullet\ar@{-}[r]&\bullet\ar@{-}[r]\ar@{-}[d]&\bullet\\
&&&\bullet&&&&\bullet&
}\qquad\qquad\qquad
$
}
\resizebox{6cm}{!}{
$
\xymatrix@R=4mm{
\widetilde{CD}_n:&&\bullet\ar@{-}[r]&
\bullet\ar@{-}[r]\ar@{-}[d]&\bullet\ar@{-}[r]&
\dots\ar@{-}[r]&\bullet\ar@/_2mm/@{-}[r]&\bullet\ar@/_2mm/[l]\\
&&&\bullet&&&&&&
}\qquad\qquad\qquad
$
}

\resizebox{6cm}{!}{
$
\xymatrix@R=4mm{
\widetilde{E}_6:&&\bullet\ar@{-}[r]&\bullet\ar@{-}[r]&\bullet\ar@{-}[r]\ar@{-}[d]&
\bullet\ar@{-}[r]&\bullet\\
&&&&\bullet\ar@{-}[r]&\bullet&
}\qquad\qquad\qquad
$
}
\resizebox{5cm}{!}{
$
\xymatrix@R=4mm{
\widetilde{E}_7:&&\bullet\ar@{-}[r]&\bullet\ar@{-}[r]&
\bullet\ar@{-}[r]&\bullet\ar@{-}[r]\ar@{-}[d]&
\bullet\ar@{-}[r]&\bullet\ar@{-}[r]&\bullet\\
&&&&&\bullet&&
}
$
}

\resizebox{7cm}{!}{
$
\xymatrix@R=4mm{
\widetilde{E}_8:&&\bullet\ar@{-}[r]&\bullet\ar@{-}[r]&\bullet\ar@{-}[r]\ar@{-}[d]&
\bullet\ar@{-}[r]&\bullet\ar@{-}[r]&\bullet\ar@{-}[r]&\bullet\\
&&&&\bullet&&
}\qquad\qquad\qquad
$
}
\resizebox{5cm}{!}{
$
\xymatrix@R=4mm{
\widetilde{F}_{41}:&&\bullet\ar@{-}[r]&
\bullet\ar@{-}[r]&\bullet\ar@{-}@/_2mm/[r]\ar@/^2mm/[r]&\bullet\ar@{-}[r]&\bullet
}\qquad\qquad\qquad
$
}

\resizebox{5cm}{!}{
$
\xymatrix@R=4mm{
\widetilde{F}_{42}:&&\bullet\ar@{-}[r]&\bullet\ar@{-}@/_2mm/[r]\ar@/^2mm/[r]&\bullet\ar@{-}[r]&\bullet\ar@{-}[r]&\bullet
}\qquad\qquad\qquad
$
}
\resizebox{3cm}{!}{
$
\xymatrix@R=4mm{
\widetilde{G}_{21}:&\bullet\ar@{-}[r]&
\bullet\ar@{-}[r]\ar@/_2mm/[r]\ar@/^2mm/[r]&
\bullet
}\qquad\qquad
$
}
\resizebox{3cm}{!}{
$
\xymatrix@R=4mm{
\widetilde{G}_{22}:&\bullet\ar@{-}[r]\ar@/_2mm/[r]\ar@/^2mm/[r]&
\bullet\ar@{-}[r]&\bullet
}\qquad\qquad
$
}

\resizebox{5cm}{!}{
$
\xymatrix@R=4mm{
\widetilde{L}_{n}:&&\bullet\ar@{-}[r]\ar@{-}@(ul,dl)[]&
\bullet\ar@{-}[r]&\dots\ar@{-}[r]&\bullet\ar@{-}@(ur,dr)[]
}\qquad\qquad\qquad
$
}
\resizebox{6cm}{!}{
$
\xymatrix@R=4mm{
\widetilde{BL}_n:&&\bullet\ar@/_2mm/@{-}[r]&
\bullet\ar@{-}[r]\ar@/_2mm/[l]&\bullet\ar@{-}[r]&\dots\ar@{-}[r]&
\bullet\ar@{-}@(ur,dr)[]
}\qquad\qquad\qquad
$
}

\resizebox{6cm}{!}{
$
\xymatrix@R=4mm{
\widetilde{CL}_n:&&\bullet\ar@/_2mm/@{-}[r]\ar@/^2mm/[r]&
\bullet\ar@{-}[r]&\bullet\ar@{-}[r]&\dots\ar@{-}[r]&\bullet\ar@{-}@(ur,dr)[]
}\qquad\qquad\qquad
$
}
\resizebox{6cm}{!}{
$
\xymatrix@R=4mm{
\widetilde{DL}_n:&&\bullet\ar@{-}[r]&
\bullet\ar@{-}[r]\ar@{-}[d]&\bullet\ar@{-}[r]&\dots\ar@{-}[r]&
\bullet\ar@{-}[r]&\bullet\ar@{-}@(ur,dr)\\
&&&\bullet&&&&&
}\qquad\qquad\qquad
$
}

We note that in types $D$ and $E$ there is a natural bijection
between the corresponding Dynkin diagrams and affine Dynkin diagrams.
In type $A$ this bijection exists, with the exception for rank $1$.
This can be amended by the convention to treat $\widetilde{A}_{12}$
as a simply laced diagram.

\subsection{Classifications that use Dynkin diagrams}\label{s2.2}

There are a few famous classification results in mathematics that
use Dynkin diagrams. For example, Dynkin diagrams classify
the following mathematical objects, see \cite{Hu75,EW,Kn}:
\begin{itemize}
\item irreducible finite root systems;
\item simple complex finite dimensional Lie algebras;
\item simple algebraic groups over an algebraically closed
field, up to isogeny;
\item simply connected complex Lie groups which are simple modulo
centers;
\item simply connected compact Lie groups which are simple modulo
centers.
\end{itemize}

As a related classification one could also mention that of 
irreducible Weyl groups (the caveat here is that this
classification is not bijective as, for instance,
the Weyl groups for types $B_n$ and $C_n$ are isomorphic).

\subsection{$ADE$ classifications}\label{s2.3}

It is quite remarkable that a significantly larger variety of
mathematical objects admits a classification in terms of 
simply laced Dynkin diagrams, the so called {\em $ADE$-classification}.
For example, simply laced Dynkin diagrams classify
the following mathematical objects:
\begin{itemize}
\item simply laced root systems;
\item simply laced Lie algebras;
\item finite subgroups of $\mathrm{SL}_2(\mathbb{C})$, see \cite{Mc,Re};
\item discrete subgroups of $\mathrm{SU}(2)$
(the so-called {\em McKay correspondence}), see \cite{Mc,Re};
\item underlying unoriented graphs for quivers
of finite representation type, see \cite{Ga};
\item simple hypersurface singularities (a.k.a. DuVal or Kleinian
singularities), see \cite{Ar,St};
\item connected finite graphs for which the spectral radius of the adjacency
matrix is less than $2$, see \cite{Sm};
\item connected finite graphs for which the spectral radius of the adjacency
matrix equals $2$, see \cite{Sm};
\item minimal and $A^{(1)}_1$-conformal 
invariant theories, see \cite{CIZ};
\item simple transitive 2-representations of Soergel bimodules with 
non-ext\-reme apex in finite dihedral types, see \cite{KMMZ,MT}.
\end{itemize}
Additionally, one should also mention the $ADE$-type classification of
$\mathfrak{sl}_2$ conformal field theories in \cite{KO}.

\subsection{Infinite Dynkin diagrams}\label{s2.4}

The papers \cite{HPR,HPR2} propose the following 
infinite generalizations of Dynkin diagrams
and discuss their relevance in the context of representation theory
of associative algebras:

\resizebox{6cm}{!}{
$
\xymatrix@R=4mm{
A_\infty:&&\bullet\ar@{-}[r]&\bullet\ar@{-}[r]&
\bullet\ar@{-}[r]&\dots
}\qquad\qquad\qquad
$
}
\resizebox{6cm}{!}{
$
\xymatrix@R=4mm{
A_\infty^\infty:&&\dots\ar@{-}[r]&\bullet\ar@{-}[r]&\bullet\ar@{-}[r]&
\bullet\ar@{-}[r]&\dots
}\qquad\qquad\qquad
$
}

\resizebox{6cm}{!}{
$
\xymatrix@R=4mm{
B_\infty:&&\bullet\ar@/_2mm/@{-}[r]&\bullet\ar@{-}[r]\ar@/_2mm/[l]&
\bullet\ar@{-}[r]&\dots
}\qquad\qquad\qquad
$
}
\resizebox{6cm}{!}{
$
\xymatrix@R=4mm{
C_\infty:&&\bullet\ar@/_2mm/@{-}[r]\ar@/^2mm/[r]&\bullet\ar@{-}[r]&
\bullet\ar@{-}[r]&\dots
}\qquad\qquad\qquad
$
}

\resizebox{6cm}{!}{
$
\xymatrix@R=4mm{
D_\infty:&&\bullet\ar@{-}[r]&\bullet\ar@{-}[r]&
\bullet\ar@{-}[r]&\dots\\
&&&\bullet\ar@{-}[u]&
}\qquad\qquad\qquad
$
}
\resizebox{6cm}{!}{
$
\xymatrix@R=4mm{
T_\infty:&&\bullet\ar@{-}@(ul,dl)[]\ar@{-}[r]&\bullet\ar@{-}[r]&
\bullet\ar@{-}[r]&\dots
}\qquad\qquad\qquad
$
}

\section{$\mathfrak{sl}_2$-combinatorics}\label{s3}

\subsection{McKay correspondence as inspiration}\label{s3.1}

In the case of finite subgroups of $\mathrm{SL}_2(\mathbb{C})$,
the McKay correspondence mentioned in Subsection~\ref{s2.3}
works as follows: let $\mathscr{C}$ be the monoidal category of
all finite dimensional $\mathrm{SL}_2(\mathbb{C})$-module. 
The monoidal structure here is the obvious one in which the 
trivial module serves as the monoidal unit and tensor product 
is given by tensoring over $\mathbb{C}$ and using the diagonal 
action of the group on this tensor product. The category 
$\mathscr{C}$ is generated, as a monoidal category, by the 
natural ($2$-dimensional) module $V=\mathbb{C}^2$.

Let $G$ be a finite subgroup of $\mathrm{SL}_2(\mathbb{C})$. Then
$\mathscr{C}$ acts on the category of finite dimensional 
$G$-modules in the obvious way, that is, using the tensor 
product and restriction. Let $L_1,L_2,\dots, L_k$ be a complete 
and irredundant list of representatives of the isomorphism classes
of simple $G$-modules. As any finite dimensional $G$-module
is completely reducible, the module $V\otimes_{\mathbb{C}}L_i$
is determined uniquely, up to isomorphism, by the composition
multiplicities $[V\otimes_{\mathbb{C}}L_i:L_j]$. Since $V$
is self-dual, these multiplicities are symmetric in the sense that
\begin{displaymath}
[V\otimes_{\mathbb{C}}L_i:L_j]=[V\otimes_{\mathbb{C}}L_j:L_i], 
\end{displaymath}
for all $i,j$, and hence can be represented as an unoriented graph
with vertices the $L_i$'s. The point of the McKay correspondence
is that
\begin{itemize}
\item this graph determines $G$ uniquely, up to conjugacy;
\item this graph is an affine Dynkin diagram of type $ADE$.
\end{itemize}

\subsection{Lie-algebraic setup}\label{s3.2}

Inspired by the classical McKay correspondence, in \cite{MZ1},
we looked into the following problem: let $\mathfrak{g}$
be the Lie algebra $\mathfrak{sl}_2(\mathbb{C})$.
Denote by $\mathscr{D}$ the monoidal category of all finite dimensional 
$\mathfrak{g}$-modules. We note that the monoidal category
$\mathscr{D}$ is monoidally equivalent to the monoidal category
$\mathscr{C}$ from the previous subsection. The category $\mathscr{D}$
acts naturally on the category $\mathfrak{g}\text{-}\mathrm{Mod}$
of all $\mathfrak{g}$-modules.

Furthermore, for any $\mathfrak{g}$-module $N$ of finite length,
we can consider the additive closure $\mathrm{add}(\mathscr{D}\cdot N)$,
inside $\mathfrak{g}\text{-}\mathrm{Mod}$, of all modules of the
form $M\otimes_{\mathbb{C}}N$, where we have $M\in \mathscr{D}$. Then
the action of $\mathscr{D}$ on $\mathfrak{g}\text{-}\mathrm{Mod}$
restricts to the action of $\mathscr{D}$ on 
$\mathrm{add}(\mathscr{D}\cdot N)$.  If $N=0$, then
$\mathrm{add}(\mathscr{D}\cdot N)=0$, so this case is not interesting.
Therefore we assume $N\neq 0$.

In the case $N\neq 0$, the category $\mathrm{add}(\mathscr{D}\cdot N)$
has countably many pair-wise non-isomorphic indecomposable objects.
Here we emphasize that these indecomposable objects are not 
necessarily simple.
Let $X_1,X_2,\dots$ be a complete and irredundant list of 
representatives of the isomorphism classes of indecomposable objects
in $\mathrm{add}(\mathscr{D}\cdot N)$. The category $\mathscr{D}$
is generated, as a monoidal category, by the natural
(2-dimensional) simple $\mathfrak{g}$-module $V=\mathbb{C}^2$.
For all positive integers $i,j$, we can consider the multiplicity
$[V\otimes_{\mathbb{C}}X_i:X_j]$ of the indecomposable module 
$X_j$ as a direct summand  of the (in general, decomposable)
module
$V\otimes_{\mathbb{C}}X_i$. An important difference with the group
case is that we can no longer expect that 
\begin{displaymath}
[V\otimes_{\mathbb{C}}X_i:X_j]= [V\otimes_{\mathbb{C}}X_j:X_i],
\end{displaymath}
in general. The module $V$ is still self-dual, so we do have that
\begin{displaymath}
\dim\mathrm{Hom}_\mathfrak{g}(V\otimes_{\mathbb{C}}X_i,X_j)=
\dim\mathrm{Hom}_\mathfrak{g}(X_i,V\otimes_{\mathbb{C}}X_j),
\end{displaymath}
however, in the case when $X_i$ and/or $X_j$ are not simple,
we can no longer interpret the dimensions of these homomorphism spaces
as multiplicities, in general.

Therefore our infinite multiplicity matrix can be encoded as
a directed graph: the vertices are the $X_i$'s and the number
of arrows from $X_i$ to $X_j$ equals $[V\otimes_{\mathbb{C}}X_i:X_j]$.
It is convenient to simplify this graph and replace each pair 
of opposite arrows by an unoriented arrow. This can, potentially,
create a mixed graph where we have both oriented and unoriented
arrow. For example, the graph
$\xymatrix{\bullet\ar@/^2mm/@{-}[rr]\ar@/_2mm/[rr]&&\bullet}$ 
is the simplification of
$\xymatrix{\bullet\ar[rr]\ar@/_2mm/[rr]&&\bullet\ar@/_2mm/[ll]}$.
We will denote this graph by $\Gamma_N$ and call it the {\em action graph}.

We will be interested in such properties of $\Gamma_N$ as 
being connected or strongly connected and will discuss strongly
connected components of this graph. These notions refer to the original
oriented graph. Strongly connected components of $\Gamma_N$ are
important as they correspond to {\em transitive $\mathscr{D}$-actions} in
the sense of \cite{MM5}. Among transitive $\mathscr{D}$-actions,
of special interest are the so-called {\em simple transitive} actions,
that is, those which do not have any non-trivial $\mathscr{D}$-invariant
ideals. For such $\mathscr{D}$-actions there is a weak form of the
Jordan-H{\"o}lder theory, see \cite{MM5}.

\subsection{First combinatorial result}\label{s3.3}

The following result summarizes \cite[Theorem~24]{MZ1}.

\begin{theorem}\label{thm-s3.3}
Let $L$ be a simple $\mathfrak{sl}_2$-module. Then every strongly connected
component of $\Gamma_L$ is an infinite Dynkin diagram of type
$A_\infty$, $A_\infty^\infty$, $C_\infty$ or $T_\infty$.
\end{theorem}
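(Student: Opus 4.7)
The plan is to proceed by case analysis on the isomorphism class of $L$, using the classification of simple $\mathfrak{sl}_2$-modules due to Block. Every simple $\mathfrak{sl}_2$-module belongs to one of four families: a finite-dimensional simple $L(n)$ with $n\in\mathbb{Z}_{\geq 0}$; an infinite-dimensional highest weight simple $L(\lambda)$ with $\lambda\in\mathbb{C}\setminus\mathbb{Z}_{\geq 0}$; its lowest-weight counterpart; or a torsion-free dense simple, parametrised by its weight support $\lambda+\mathbb{Z}$ together with the Casimir eigenvalue. For each representative $L$ I would explicitly describe all indecomposable summands of $V^{\otimes n}\otimes L$ for $n\geq 0$, compute the transition multiplicities $[V\otimes X_i : X_j]$, and then read off the strongly connected components of $\Gamma_L$.

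The core computational input is the standard filtration of $V\otimes X$ for any $\mathfrak{sl}_2$-module $X$: since $V$ has weights $\pm 1$, one obtains a two-step filtration whose subquotients are the $\pm 1$-weight shifts of $X$. Whether this filtration splits, and which indecomposables appear as direct summands, is governed by the regularity of the relevant infinitesimal character: regular blocks produce a clean direct sum of shifted simples, whereas singular or wall blocks produce indecomposable projectives with a Verma flag. Iterating these calculations starting from $L$, and combining them with the self-duality of $V$ (which gives the adjunction $\dim\Hom(V\otimes X_i,X_j)=\dim\Hom(X_i,V\otimes X_j)$ and hence a partial symmetry of the multiplicity matrix), determines $\Gamma_L$ completely.

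Running through the four families one would then find: for $L$ finite-dimensional, the semi-infinite chain of finite-dimensional simples, with unique end at the trivial module, giving an $A_\infty$-component; for $L$ of highest weight type with generic non-integral $\lambda$, or a dense simple with generic Casimir, the pairwise non-isomorphic $\pm 1$-shifts of $L$ form a bi-infinite path and yield an $A_\infty^\infty$-component; for integral non-dominant highest weight parameters, two shifted summands collapse into an indecomposable projective at the boundary, producing the double edge characteristic of $C_\infty$; for dense simples at the special Casimir values where the shift fixes an endpoint module, a self-loop appears, giving a $T_\infty$-component.

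The main obstacle lies in pinning down exactly these degenerate cases: identifying the precise integral or self-dual parameters at which translation-functor wall-crossing produces an indecomposable non-simple summand, and verifying that the resulting local multiplicity pattern around the degenerate vertex matches one of $C_\infty$ or $T_\infty$ rather than some more exotic infinite shape. The a-priori upper bound is structural and easier: since $V$ is two-dimensional and self-dual, each vertex of $\Gamma_L$ carries at most two outgoing and at most two incoming edges counted with multiplicity, and the adjunction symmetry forces every strongly connected component to be a linear path possibly decorated at an endpoint. The four diagrams $A_\infty$, $A_\infty^\infty$, $C_\infty$, $T_\infty$ then exactly enumerate the admissible endpoint decorations.
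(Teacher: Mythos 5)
Your case analysis is not the one that actually covers all simple $\mathfrak{sl}_2$-modules, and this is a genuine gap rather than a presentational one. Block's classification does not reduce to the four families you list (finite-dimensional, highest weight, lowest weight, dense weight): those are exactly the simple \emph{weight} modules, whereas Block's theorem adds a large further family parametrized by (similarity classes of) irreducible elements of a localization of $U(\mathfrak{sl}_2)$, containing in particular the Whittaker modules and many other simples with no weight-space decomposition. This omission is fatal for the statement at hand, because the type $T_\infty$ is realized precisely by a simple Whittaker module (with Whittaker eigenvalue $\tfrac12$), as recorded in Subsection~\ref{s3.3}. Your proposed source of the self-loop --- a dense weight module at a special Casimir value --- cannot occur: if $L$ is a weight module with support in $\lambda+2\mathbb{Z}$, then $V\otimes_{\mathbb{C}}L$ has support in $\lambda+1+2\mathbb{Z}$, so no object can ever be a summand of $V$ tensored with itself, and no weight module produces a loop. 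Within the weight-module families your computations are essentially the standard translation-functor analysis and do give $A_\infty$, $A_\infty^\infty$ and $C_\infty$ correctly, but the $T_\infty$ case, and more generally the whole non-weight part of the classification, is untreated.

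The closing ``structural upper bound'' is also too optimistic. Knowing that $V$ is two-dimensional and self-dual bounds degrees in the graph, but it does not force a strongly connected component to be a linear path with an admissible endpoint decoration: the diagrams $B_\infty$ and $D_\infty$ satisfy exactly the same local degree constraints, and excluding them requires real work. Indeed, $D_\infty$ \emph{is} realizable once one passes to simple modules over a Lie algebra with Levi factor $\mathfrak{sl}_2$ (Proposition~\ref{prop-s3.5-1}), and the non-existence of $B_\infty$ is a separate theorem with hypotheses on the underlying category (Proposition~\ref{prop-s3.6-4}); neither fact follows from the adjunction symmetry
\begin{displaymath}
\dim\mathrm{Hom}_\mathfrak{g}(V\otimes_{\mathbb{C}}X_i,X_j)=
\dim\mathrm{Hom}_\mathfrak{g}(X_i,V\otimes_{\mathbb{C}}X_j),
\end{displaymath}
especially since, as the paper stresses, these dimensions cannot be read as summand multiplicities when the $X_i$ are not simple. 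To repair the argument you would need to (i) run the analysis over the full Block classification, including the non-weight simples, and (ii) replace the heuristic endpoint argument by an actual proof that branch vertices and $B_\infty$-type double edges cannot occur for $\Gamma_L$ with $L$ a simple $\mathfrak{sl}_2$-module.
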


Here, the type $A_\infty$ is realizable when considering the regular 
action of $\mathscr{D}$ on itself, so we can, for example, choose
$L$ to be the trivial $\mathfrak{sl}_2$-module. 

The type $A_\infty^\infty$ admits many pairwise non-equivalent realizations.
For example, one can choose $L$ to be the simple highest weight module
$L(\lambda)$, for any non-integral highest weight $\lambda$. Using some
classical results of Dixmier on non-isomorphism of primitive quotients of 
$U(\mathfrak{sl}_2)$, see \cite{Di0}, one can show that the realizations via
$L(\lambda)$ and $L(\mu)$, where both $\lambda$ and $\mu$ are non-integral,
are not equivalent, as $\mathscr{D}$-module categories, provided that
$\lambda-\mu$ is not an integer and $(\lambda+1)^2\neq(\mu+1)^2$,
which means that the central characters of $L(\lambda)$ and $L(\mu)$
are different.

To realize the type $C_\infty$, one can take $L$ to be $L(-1)$,
that is, the unique singular integral highest weight module. 
The category $\mathrm{add}(\mathscr{D}\cdot L(-1))$ in this case
will be the category of projective-injective objects in the integral
part of the Bernstein-Gelfand-Gelfand category $\mathcal{O}$,
see \cite{BGG,Hu,Maz1}.

Finally, the type $T_\infty$ is realizable by taking $L$
to be a simple Whittaker module with Whittaker eigenvalue $\frac{1}{2}$.

We note that, in all the examples above, already the action graph
$\Gamma_L$ is strongly connected. For some other choices of $L$,
for example, if one takes $L=L(\lambda)$, for $\lambda\in\{-2,-3,\dots\}$,
the action graph $\Gamma_L$ will not be connected. In this particular
case, the graph $\Gamma_L$ will have two connected components, one of type
$A_\infty$ and the other one of type $C_\infty$.

\subsection{Extending the setup: subalgebras}\label{s3.4}

Similarly to the classical McKay correspondence, one also has the natural
question of how the monoidal category $\mathscr{D}$ acts on the
categories of finite dimensional modules over  Lie subalgebras of
$\mathfrak{g}$. Unlike the classical McKay correspondence, the 
categories of finite dimensional modules over proper Lie subalgebras
of $\mathfrak{g}$ are quite big, in fact, they are significantly bigger
than $\mathscr{D}$ itself. Up to inner automorphisms, there are only a few 
cases to consider, with the case of the zero subalgebra being trivial.
So, let us look at all the cases separately.

In the case of the algebra $\mathfrak{g}$ itself considered as  a
subalgebra of $\mathfrak{g}$, we just get the left regular action of
$\mathscr{D}$ on itself. This has combinatorics of type $A_\infty$.

In the codimension one case, we have a unique, up to an inner automorphism, 
subalgebra, say, the standard Borel subalgebra  $\mathfrak{b}$ of $\mathfrak{g}$.
Let $e,h,f$ be the standard basis of $\mathfrak{g}$. Then $e$ and $h$ form a 
basis of $\mathfrak{b}$. All simple finite dimensional $\mathfrak{b}$-modules 
have dimension $1$ and correspond to weights $\lambda\in \mathbb{C}$.
This means that $h$ acts on the module, which we denote by $N_\lambda$,
by $\lambda$ and $e$ acts as $0$. The category of finite dimensional 
$\mathfrak{b}$-modules is very far from being semi-simple. For example,
all simple modules admit self-extensions (using the action of $h$).
Also, the action of $e$ can be used to produce extensions of 
$N_\lambda$ by $N_{\lambda+2}$. In fact, as is shown in \cite{Mak,OS},
the category of finite dimensional $\mathfrak{b}$-modules has wild
representation type. Consequently, we do not know classification of 
indecomposable objects in this category which makes the action of 
$\mathscr{D}$ on it really difficult to study. Let us again instead
look at the $\mathscr{D}$-module categories of the form
$\mathrm{add}(\mathscr{D}\cdot L)$, where $L$ is a simple
$\mathfrak{b}$-module.

Denote by $M_\lambda$ the induced module 
$U(\mathfrak{b})\otimes_{U(\langle h\rangle)}\mathbb{C}_\lambda$, where 
$\lambda$ is the one-di\-men\-si\-o\-nal $U(\langle h\rangle)$-module on which
$h$ acts as $\lambda$. Then $M_\lambda$ is indecomposable with simple top
$N_\lambda$. Moreover, the kernel of the projection $M_\lambda\tto N_\lambda$
is isomorphic to $M_{\lambda+2}$. Consequently, $M_\lambda$ has an 
infinite composition series given by its radical filtration and the
corresponding simple subquotients are $N_\lambda$, $N_{\lambda+2}$,
$N_{\lambda+4}$ and so on. Denote by $Q(\lambda,k)$ the quotient of 
$M_\lambda$ by $M_{\lambda+2k}$. In particular $N_\lambda=Q(\lambda,1)$.
The following is \cite[Proposition~15]{MZ1}.

\begin{proposition}\label{prop-s3.4-1}
For $\lambda\in\mathbb{C}$, the $\mathscr{D}$-module category
$\mathrm{add}(\mathscr{D}\cdot N_\lambda)$ is simple transitive of
type $A_\infty$. Its indecomposable objects are $Q(\lambda-2(k-1),k)$,
for $k\in\mathbb{Z}_+$, and, up to equivalence of 
$\mathscr{D}$-module categories, it does not depend on $\lambda$.
\end{proposition}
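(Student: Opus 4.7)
My plan is to reduce the whole statement to a single decomposition formula for the action of the monoidal generator $V\in\mathscr{D}$ on modules of the form $Q(\mu,k)$, and then read off the list of indecomposables, the action graph and the properties of transitivity, simple transitivity and $\lambda$-independence from that formula.

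I would begin by computing $V\otimes_{\mathbb{C}}M_\mu$ via the tensor identity (Frobenius reciprocity for the Hopf subalgebra $U(\langle h\rangle)\subset U(\mathfrak{b})$). Since $V$ restricts to $\langle h\rangle$ as $\mathbb{C}_1\oplus\mathbb{C}_{-1}$, this yields a $\mathfrak{b}$-module isomorphism $V\otimes_{\mathbb{C}}M_\mu\cong M_{\mu+1}\oplus M_{\mu-1}$. Applying the exact functor $V\otimes_{\mathbb{C}}-$ to the short exact sequence $0\to M_{\mu+2k}\to M_\mu\to Q(\mu,k)\to 0$ then realises $V\otimes_{\mathbb{C}}Q(\mu,k)$ as a $2k$-dimensional quotient of $M_{\mu+1}\oplus M_{\mu-1}$. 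A direct computation of the $e$-action shows that the top of this quotient is semisimple of the form $N_{\mu-1}\oplus N_{\mu+1}$, and matching weight-space multiplicities with candidate summands of the correct lengths identifies them uniquely as $Q(\mu-1,k+1)$ and $Q(\mu+1,k-1)$, with the convention $Q(\mu',0)=0$. The one non-formal step is verifying that the resulting short exact sequence
\[
0\to Q(\mu-1,k+1)\to V\otimes_{\mathbb{C}}Q(\mu,k)\to Q(\mu+1,k-1)\to 0
\]
actually splits. I would do this by exhibiting an explicit weight-$(\mu+1)$ lift of the top, taken as a suitable linear combination of $v_+\otimes v_\mu$ and $v_-\otimes e\cdot v_\mu$ whose coefficient depends linearly on $k$ so that it is annihilated by $e^{k-1}$, and then checking by a weight-space argument that the $\mathfrak{b}$-submodule it generates is disjoint from the one generated by $v_-\otimes v_\mu$.

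Once this decomposition rule is in place, an easy induction starting from $N_\lambda$ produces the full list of indecomposable objects, which is stable under $V\otimes_{\mathbb{C}}-$ and therefore, since $V$ generates $\mathscr{D}$ monoidally, under the full action of $\mathscr{D}$. The corresponding action graph is a one-sided infinite linear chain, hence of type $A_\infty$. Transitivity is immediate because $V\otimes-$ joins each vertex to both of its neighbours, and simple transitivity follows from the standard criterion in \cite{MM5}: the multiplicity-free decomposition rule prevents any proper nontrivial $\mathscr{D}$-stable ideal, since such an ideal would be generated by identity endomorphisms at some vertex and would immediately propagate through $V\otimes-$ to contain all identities. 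Finally, $\lambda$-independence is transparent from the uniformity of the decomposition formula in $\mu$: the obvious bijection of indecomposable objects between $\mathrm{add}(\mathscr{D}\cdot N_\lambda)$ and $\mathrm{add}(\mathscr{D}\cdot N_{\lambda'})$ intertwines the $V$-action and hence extends to an equivalence of $\mathscr{D}$-module categories.

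The hard part will be the splitting of the short exact sequence described above. This is the single place where the proof is not purely formal, because the embedding of $V\otimes M_{\mu+2k}$ inside the direct-sum decomposition $M_{\mu+1}\oplus M_{\mu-1}$ is not block-diagonal, so the splitting does not come for free from the projection formula. The coefficient in the explicit section is essentially the only genuinely computational input of the proof; once it is in place, the remaining assertions are direct consequences.
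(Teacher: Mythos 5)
The paper does not actually prove this proposition; it quotes it from [MZ1, Proposition~15], so your proposal has to stand on its own. Its computational core is sound: the tensor identity does give $V\otimes_{\mathbb{C}}M_\mu\cong M_{\mu+1}\oplus M_{\mu-1}$, the induced short exact sequence for $V\otimes_{\mathbb{C}}Q(\mu,k)$ is the right one, and your proposed section $-(k-1)\,v_+\otimes v_\mu+v_-\otimes e\cdot v_\mu$ is indeed a weight-$(\mu+1)$ vector killed by $e^{k-1}$ and not contained in the submodule generated by $v_-\otimes v_\mu$, so $V\otimes_{\mathbb{C}}Q(\mu,k)\cong Q(\mu-1,k+1)\oplus Q(\mu+1,k-1)$ holds and the graph is $A_\infty$. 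Do note, however, that iterating this from $N_\lambda$ yields the indecomposables $Q(\lambda-(k-1),k)$, not $Q(\lambda-2(k-1),k)$ as printed in the statement; with the conventions of this paper the printed list cannot be literally correct (all its members have weights congruent to $\lambda$ modulo $2$, whereas $V\otimes_{\mathbb{C}}N_\lambda$ has weights $\lambda\pm 1$), so you should state and justify your corrected list explicitly rather than letting the induction pass silently.

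The genuine gaps are in the last two claims. For simple transitivity you assert that a nontrivial $\mathscr{D}$-stable ideal ``would be generated by identity endomorphisms at some vertex''; this is false for ideals of an additive category in general (the radical is the standard counterexample, and the type $C_\infty$ category $\mathrm{add}(\mathscr{D}\cdot L(\mone))$ is simple transitive without being semisimple, so multiplicity-freeness of the fusion rule cannot by itself be the reason). What is missing is the computation $\mathrm{Hom}_{\mathfrak{b}}(Q_k,Q_l)=\delta_{kl}\,\mathbb{C}$ for $Q_k=Q(\lambda-(k-1),k)$ --- a short weight/parity argument using that each weight space is one-dimensional --- which shows the underlying category is semisimple; only then is every nonzero ideal forced to contain an identity morphism, after which strong connectivity of the graph finishes the argument. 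The $\lambda$-independence argument is likewise invalid as written: a bijection on indecomposables that matches decomposition numbers does not extend to an equivalence of $\mathscr{D}$-module categories; the paper itself emphasizes that this combinatorics is only a rough invariant and exhibits inequivalent type $A_\infty^\infty$ realizations with identical combinatorics. Both gaps close at once if you instead verify that the evaluation functor $-\otimes_{\mathbb{C}}N_\lambda\colon\mathscr{D}\to\mathrm{add}(\mathscr{D}\cdot N_\lambda)$, a strict morphism of $\mathscr{D}$-module categories sending $L(n)$ to $Q(\lambda-n,n+1)$, is fully faithful; this reduces to the same $\mathrm{Hom}$ computation and identifies $\mathrm{add}(\mathscr{D}\cdot N_\lambda)$ with the regular $\mathscr{D}$-module category uniformly in $\lambda$.
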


In the codimension two case, up to an inner automorphism, we may
choose the subalgebra to be either $\langle h\rangle$
or $\langle e\rangle$. In both cases, the universal enveloping algebra
of the subalgebra is just the polynomial algebra in one variable.
Therefore the indecomposable objects are classified by the Jordan normal form.
The simple objects are classified by $\lambda\in \mathbb{C}$ and 
have dimension $1$. We denote by $K_\lambda$ the simple 
$\langle h\rangle$-module on which $h$ acts as $\lambda$
and we denote by $F_\lambda$ the simple 
$\langle e\rangle$-module on which $e$ acts as $\lambda$.
For $k\in\mathbb{Z}_{>0}$, we also denote by $F_\lambda^k$
the uniserial $k$-dimensional indecomposable module 
which corresponds to $\lambda$ (it is given by the Jordan
cell of size $k\times k$ with eigenvalue $\lambda$).
The following result combines \cite[Propositions~14 and 15]{MZ1}.

\begin{proposition}\label{prop-s3.4-2}
Let $\lambda\in\mathbb{C}$.

\begin{enumerate}[$($a$)$]
\item\label{prop-s3.4-2.1} 
The $\mathscr{D}$-module category
$\mathrm{add}(\mathscr{D}\cdot K_\lambda)$ is simple transitive of
type $A_\infty^\infty$. Its indecomposable objects are $K_{\lambda+2k}$,
for $k\in\mathbb{Z}$.
\item\label{prop-s3.4-2.2} 
The $\mathscr{D}$-module category
$\mathrm{add}(\mathscr{D}\cdot F_\lambda)$ is transitive of
type $A_\infty$. Its indecomposable objects are $F_\lambda^k$,
for $k\in\mathbb{Z}_+$.
\end{enumerate}
\end{proposition}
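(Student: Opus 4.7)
The plan is to compute the $\mathscr{D}$-action on the given simples directly, by restricting the monoidal generator $V=\mathbb{C}^2$ to the relevant one-dimensional subalgebra and applying the coproduct formula $x(v\otimes w)=xv\otimes w+v\otimes xw$ for $x\in\mathfrak{g}$. Since each subalgebra is one-dimensional and abelian, everything reduces to elementary linear algebra, and identifying the orbit is essentially the same as reading off the action graph.

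For part (a), $h$ acts semi-simply on $V$ with two distinct non-zero eigenvalues, so $V$ splits as a direct sum of two one-dimensional $\langle h\rangle$-weight spaces. The coproduct then yields a split decomposition of $V\otimes_{\mathbb{C}}K_\lambda$ as a sum of two simple $\langle h\rangle$-modules obtained from $K_\lambda$ by shifting its weight in two opposite directions; no non-trivial extensions can appear, because any finite-dimensional $\langle h\rangle$-module with pairwise distinct $h$-eigenvalues is automatically semi-simple. Iterating this identity, the orbit of $K_\lambda$ under $V\otimes-$ is precisely the two-sided chain $\{K_{\lambda+2k}:k\in\mathbb{Z}\}$ described in the statement, and each vertex of the action graph is joined to its two neighbours by a single undirected edge. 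This is the diagram $A_\infty^\infty$. Simple transitivity follows at once from strong connectedness: any non-zero $\mathscr{D}$-invariant additive subcategory contains some $K_\nu$ and, by repeatedly tensoring with $V$, every $K_{\nu+2k}$.

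For part (b), restrict $V$ instead to $\langle e\rangle$. The element $e$ is nilpotent on $V$ and acts as a single Jordan block of size two, so $V\cong F_0^2$ as an $\langle e\rangle$-module. The central step is the Jordan-form identity
\[
V\otimes_{\mathbb{C}}F_\lambda^k\ \cong\ F_\lambda^{k-1}\oplus F_\lambda^{k+1},\qquad k\ge 1,
\]
with the convention $F_\lambda^0=0$. The cleanest way to obtain this is to identify $U(\langle e\rangle)$ with $\mathbb{C}[t]$ and $F_\lambda^k$ with $\mathbb{C}[t]/(t-\lambda)^k$; the identity then becomes the standard Clebsch--Gordan decomposition for $\mathbb{C}[t]$ equipped with the coproduct $\Delta(t)=t\otimes 1+1\otimes t$. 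Starting from $F_\lambda^1=F_\lambda$ and iterating, one produces exactly the family $\{F_\lambda^k:k\in\mathbb{Z}_+\}$, and the action graph is an infinite chain with single undirected edges, i.e.\ the diagram $A_\infty$. Transitivity follows immediately from connectedness of this chain.

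The main obstacle is the Jordan-form identity in part (b); all remaining steps are essentially formal. The $\mathbb{C}[t]$-reformulation above is the slickest route, but one can also proceed by hand: exhibit a cyclic vector (for example the tensor of the two top generators) whose $(e-\lambda)$-orbit has length $k+1$, identify the corresponding direct summand $F_\lambda^{k+1}$, and then recover the complementary $F_\lambda^{k-1}$ by a dimension count combined with the observation that $V\otimes F_\lambda^k$ is annihilated by $(e-\lambda)^{k+1}$. Once this formula is in place, the orbit description, the determination of the action graph, and the transitivity claims in both parts are all just bookkeeping.
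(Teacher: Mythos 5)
Your argument is correct, but note that the paper itself gives no proof of this statement: it is quoted from \cite[Propositions~14 and 15]{MZ1}, so there is nothing in the present text to compare against line by line. What you do is the natural direct computation one would expect: restrict $V$ to the one-dimensional subalgebra, observe $V|_{\langle h\rangle}\cong K_1\oplus K_{\mone}$ (so tensoring splits by commuting semisimple operators) and $V|_{\langle e\rangle}\cong F_0^2$, and then invoke the Jordan-block Clebsch--Gordan identity $F_0^2\otimes F_\lambda^k\cong F_\lambda^{k+1}\oplus F_\lambda^{k-1}$, which you justify correctly via the primitive coproduct on $\mathbb{C}[t]$ (or the cyclic-vector argument). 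Two small points. First, in (a) the single step $V\otimes K_\mu\cong K_{\mu+1}\oplus K_{\mu-1}$ shifts the $h$-eigenvalue by $\pm 1$, so the full additive closure under all of $\mathscr{D}$ consists of $K_{\lambda+k}$ for all integers $k$; your phrase ``the orbit is $\{K_{\lambda+2k}\}$'' follows the indexing of the statement but is not what your own eigenvalue computation produces --- this is purely a labelling issue and does not affect the $A_\infty^\infty$ type. Second, ``simple transitivity follows from strong connectedness'' is only valid because you have already shown the underlying category in (a) is semisimple with one-dimensional endomorphism rings (so any nonzero $\mathscr{D}$-stable ideal contains some identity morphism and hence all of them); it is worth making that dependence explicit, and it is exactly why in (b), where the category is not semisimple, only transitivity is claimed.
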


In particular, we see that the actions of $\mathscr{D}$ on 
$\mathrm{add}(\mathscr{D}\cdot K_\lambda)$
and $\mathrm{add}(\mathscr{D}\cdot F_\lambda)$
are significantly different.

\subsection{Extending the setup further: Lie algebras for which 
$\mathfrak{sl}_2$ is the Levi factor}\label{s3.5}

Comparing Theorem~\ref{thm-s3.3} and Propositions~\ref{prop-s3.4-1}
and \ref{prop-s3.4-2} with the list of infinite Dynkin diagrams
in Subsection~\ref{s2.4}, we see that the types $B_\infty$ and
$D_\infty$ are missing. To incorporate the type $D_\infty$, we need
to extend our setup as follows: let $\mathfrak{q}$ be a finite
dimensional Lie algebra for which $\mathfrak{g}$ is the Levi quotient.
Then $\mathscr{D}$ can be considered as a category of 
$\mathfrak{q}$-modules through the pullback via the quotient map 
$\mathfrak{q}\tto \mathfrak{g}$. In particular, for any simple
$\mathfrak{q}$-module $N$, the category $\mathrm{add}(\mathscr{D}\cdot N)$
is, naturally, a $\mathscr{D}$-module category.

Now let $\mathfrak{q}$ be the semi-direct product $\mathfrak{g}\ltimes V$,
where $V$ is an abelian ideal given by a simple $5$-dimensional 
$\mathfrak{g}$-module. The following is \cite[Proposition~19]{MZ1}:

\begin{proposition}\label{prop-s3.5-1}
There exists a simple $\mathfrak{q}$-module $N$ such that 
$\mathrm{add}(\mathscr{D}\cdot N)$ is a simple transitive 
$\mathscr{D}$-module category of type $D_\infty$.
\end{proposition}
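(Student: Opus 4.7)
The plan is to exhibit a simple $\mathfrak{q}$-module $N$ whose action graph $\Gamma_N$ has exactly one trivalent vertex with two leaves attached and then extends as a half-infinite chain, thus matching $D_\infty$. Since Theorem~\ref{thm-s3.3} already rules out $D_\infty$ for simple $\mathfrak{g}$-modules, the non-trivial action of the ideal $V = L(4)$ on $N$ is essential: for any simple $\mathfrak{g}$-module $L$, the tensor product $V_{\mathrm{nat}} \otimes L$ (with $V_{\mathrm{nat}} = L(1)$) produces at most two non-isomorphic indecomposable summands, so the extra branching must come from $V$ acting nontrivially.

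First I would construct $N$ by a Mackey--Kirillov-type induction. Identify $V^* \cong V = L(4)$ with the space of binary quartics, pick a generic $\chi \in V^*$ and let $\mathfrak{g}_\chi$ be its (one-dimensional) stabiliser subalgebra. Then set $N := \mathrm{Ind}_{\mathfrak{g}_\chi \ltimes V}^{\mathfrak{q}}(\mathbb{C}_\chi \otimes \mathbb{C}_\mu)$ for a suitable one-dimensional character $\mu$ of $\mathfrak{g}_\chi$; the standard orbit-method argument yields simplicity for generic $\mu$.

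Next I would catalogue the expected indecomposable objects $X_0 := N$, $X_0', X_1, X_2, \dots$ corresponding to the vertices of $D_\infty$, with $X_1$ the trivalent vertex, $X_0$ and $X_0'$ the two leaves attached to it, and $X_2, X_3, \dots$ the tail, and verify the decompositions
\[
V_{\mathrm{nat}} \otimes X_0 \cong X_1, \quad V_{\mathrm{nat}} \otimes X_0' \cong X_1, \quad V_{\mathrm{nat}} \otimes X_1 \cong X_0 \oplus X_0' \oplus X_2,
\]
together with $V_{\mathrm{nat}} \otimes X_k \cong X_{k-1} \oplus X_{k+1}$ for $k \geq 2$. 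The computation is to proceed by Frobenius reciprocity, tracking the interaction between tensoring by $V_{\mathrm{nat}}$ and the $V$-action, using $[\mathfrak{g}, V] \subseteq V$ to control commutation and self-duality of $V_{\mathrm{nat}}$ to equate $\dim \mathrm{Hom}_\mathfrak{q}(V_{\mathrm{nat}} \otimes X_i, X_j)$ with $\dim \mathrm{Hom}_\mathfrak{q}(X_i, V_{\mathrm{nat}} \otimes X_j)$, which forces the graph to be unoriented at each edge.

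Simple transitivity then follows at once: strong connectedness is visible from the diagram $D_\infty$, so the action is transitive, and any non-zero $\mathscr{D}$-invariant ideal containing one $X_k$ is forced by transitivity to contain all of them. The main obstacle will be the third step, namely showing that $V_{\mathrm{nat}} \otimes X_1$ really splits into three non-isomorphic summands rather than two. This amounts to showing that the two copies of the "potential $X_0$" appearing in $V_{\mathrm{nat}} \otimes X_1$ are distinguished by the $V$-action, one recovering $N$ itself and the other producing a genuinely different simple $\mathfrak{q}$-module $X_0'$. It is precisely here that the choice $V = L(4)$ becomes essential: a generic $\mathrm{SL}_2$-orbit in $L(4)^*$ must have an isotropy type that supports this bifurcation, and this is a feature peculiar to the five-dimensional simple $\mathfrak{g}$-module rather than the smaller ones.
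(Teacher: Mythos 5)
Your target combinatorics is right (the decompositions $V_{\mathrm{nat}}\otimes X_0\cong X_1$, $V_{\mathrm{nat}}\otimes X_1\cong X_0\oplus X_0'\oplus X_2$, etc.\ are exactly what must be verified), but the construction of $N$ on which everything rests does not work as stated, and this is where the real content of the proposition lies. First, the orbit analysis is wrong: a generic binary quartic has four distinct roots, so its stabilizer in $\mathrm{SL}_2(\mathbb{C})$ is finite and the stabilizer subalgebra $\mathfrak{g}_\chi$ is \emph{zero}, not one-dimensional; one-dimensional stabilizer subalgebras occur only on the special orbits (e.g.\ $x^4$ with $\mathfrak{g}_\chi=\langle e\rangle$, or $x^2y^2$ with $\mathfrak{g}_\chi=\langle h\rangle$). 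Second, and more seriously, the algebraically induced module $U(\mathfrak{q})\otimes_{U(\mathfrak{g}_\chi\ltimes V)}(\mathbb{C}_\chi\otimes\mathbb{C}_\mu)$ is, by PBW, isomorphic as a $\mathfrak{g}$-module to $U(\mathfrak{g})\otimes_{U(\mathfrak{g}_\chi)}\mathbb{C}_\mu$; for $\mathfrak{g}_\chi=0$ this is the left regular $U(\mathfrak{g})$-module and for $\mathfrak{g}_\chi=\langle h\rangle$ it has infinite-dimensional weight spaces. In neither case is the module locally finite with finite multiplicities over $\mathfrak{g}$ --- but that is precisely the property of $N$ that the paper (following \cite{MMr,MMr2}) uses: $N$ is a simple $\mathfrak{sl}_2$-Harish-Chandra module for $\mathfrak{q}=\mathfrak{sl}_2\ltimes L(4)$, decomposing over $\mathfrak{sl}_2$ with finite multiplicities, and with the generators of $Z(U(\mathfrak{q}))$ acting in a polynomially constrained way. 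Producing such a simple module is the nontrivial existence result of \cite{MMr,MMr2}; it is not obtained by Verma-type induction from an orbit datum, and the Mackey heuristic (which in the group setting corresponds to \emph{unitary} induction, i.e.\ functions on the orbit, not the universal induced module) cannot simply be transplanted.

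Beyond the construction, two further points need attention. The key splitting $V_{\mathrm{nat}}\otimes X_1\cong X_0\oplus X_0'\oplus X_2$ --- which you correctly identify as the crux --- is justified only by an appeal to ``the isotropy type of a generic orbit,'' which both rests on the incorrect orbit picture above and is not an argument; in the actual proof this comes from the explicit $\mathfrak{sl}_2$-decompositions and central character data of the Harish-Chandra modules of \cite{MMr2}. Finally, for simple transitivity you argue that an invariant ideal containing some $X_k$ must contain all of them; that establishes transitivity, but $\mathscr{D}$-invariant ideals are ideals of \emph{morphisms}, and to conclude simplicity you need the underlying category to be semi-simple (equivalently, that all the $X_i$ are simple $\mathfrak{q}$-modules with no nonzero non-invertible morphisms between distinct ones), which your proposal never establishes --- compare Proposition~\ref{prop-s3.6-2}, which records semi-simplicity as a consequence, not a triviality.
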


The construction of the $\mathfrak{q}$-module $N$ is taken from 
\cite{MMr,MMr2}.
It has the property that the action of $\mathfrak{g}$ on it is locally
finite and has finite multiplicities. Additionally, certain generators of
the center of  $U(\mathfrak{q})$ act on $N$ in a very particular
(polynomially related) way.

\subsection{Additional results}\label{s3.6}

The paper \cite{MZ1} contains a number of interesting observations 
about $\mathscr{D}$-module categories that appear in the contexts 
described above. Here are some examples. The following result, which 
is \cite[Proposition~7]{MZ1}, describes a very strong representation
theoretic property of the combinatorial type $A_\infty$
(the term admissible is defined in Subsection~\ref{s5.4}).

\begin{proposition}\label{prop-s3.6-1}
All admissible simple transitive  $\mathscr{D}$-module categories of 
type $A_\infty$ are equivalent to the left regular $\mathscr{D}$-module category.
\end{proposition}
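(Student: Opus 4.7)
The plan is to construct an explicit equivalence from the left regular $\mathscr{D}$-module category onto an arbitrary admissible simple transitive $\mathscr{D}$-module category $\mathcal{M}$ of type $A_\infty$, and then to argue that the rigidity of the $A_\infty$-combinatorics leaves no room for any other equivalence class. Label the indecomposables of $\mathcal{M}$ as $X_0,X_1,X_2,\dots$ in the natural linear order along the graph, with $X_0$ the unique vertex of degree one. The shape of $A_\infty$ immediately forces
\begin{displaymath}
V\otimes X_0\cong X_1 \qquad\text{and}\qquad V\otimes X_n\cong X_{n-1}\oplus X_{n+1} \text{ for } n\geq 1,
\end{displaymath}
which are precisely the Clebsch--Gordan rules governing the regular action of $\mathscr{D}$ on the simples $L(0),L(1),L(2),\dots$. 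This suggests defining a functor $F\colon\mathscr{D}\to\mathcal{M}$ by $F(M):=M\cdot X_0$, which is $\mathscr{D}$-linear by construction and satisfies $F(L(0))=X_0$.

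Essential surjectivity would then follow by induction on $n$: since $V$ monoidally generates $\mathscr{D}$, and since $L(n)$ is the unique summand of $V\otimes L(n-1)$ not isomorphic to $L(n-2)$, iterating the action on $X_0$ produces at each step exactly one new indecomposable, which the $A_\infty$-combinatorics forces to be $X_n$. Hence $F(L(n))\cong X_n$ for every $n$, and the bijection on isomorphism classes of indecomposables is established.

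The heart of the argument, and the main obstacle, is full faithfulness: one must show $\mathrm{End}_{\mathcal{M}}(X_n)\cong\mathbb{C}$ and $\mathrm{Hom}_{\mathcal{M}}(X_i,X_j)=0$ for $i\neq j$. Without extra input these equalities can fail --- the multiplicity graph alone does not pin down morphism dimensions, and one could in principle imagine larger local endomorphism rings at some $X_i$ compatible with the same $A_\infty$ graph. This is precisely where the admissibility hypothesis from Subsection~\ref{s5.4} should intervene, by constraining morphism spaces of $\mathcal{M}$ tightly enough to rule out such pathological behavior at the generator $X_0$. From there the bootstrap proceeds using the self-duality of $V$ and the adjunction
\begin{displaymath}
\mathrm{Hom}_{\mathcal{M}}(V\otimes X_n,X_m)\cong\mathrm{Hom}_{\mathcal{M}}(X_n,V\otimes X_m),
\end{displaymath}
combined with the Krull--Schmidt decompositions dictated by the graph, to reduce every $\mathrm{Hom}_{\mathcal{M}}(X_i,X_j)$ to a computation involving $\mathrm{End}_{\mathcal{M}}(X_0)$.

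Once full faithfulness is in place, $F$ is an equivalence of $\mathscr{D}$-module categories, which completes the proof. The underlying moral is that $A_\infty$-combinatorics is rigid: once admissibility normalizes the endomorphism ring of the base object $X_0$, everything else is pinned down by tensoring with $V$ together with Clebsch--Gordan, so no nontrivial deformation of the regular action is possible.
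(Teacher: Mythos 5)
Your overall architecture (evaluate the action at the distinguished object $X_0$ to get a $\mathscr{D}$-linear functor ${}_\mathscr{D}\mathscr{D}\to\mathcal{M}$, check essential surjectivity by induction along the graph, then fight for full faithfulness) matches the paper's strategy: the survey proves the general version of this statement as Theorem~\ref{thm-s5.4-1}, and its final step is exactly this Yoneda/evaluation functor. The essential-surjectivity part of your argument is fine. The problem is that you have located the hard point correctly --- full faithfulness, i.e.\ $\mathrm{End}_{\mathcal{M}}(X_i)\cong\mathbb{C}$ and $\mathrm{Hom}_{\mathcal{M}}(X_i,X_j)=0$ for $i\neq j$ --- and then attributed its resolution to the wrong hypothesis. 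Admissibility only says that $\mathcal{M}$ is idempotent split with finite dimensional morphism spaces and weak kernels; it does not ``normalize'' $\mathrm{End}_{\mathcal{M}}(X_0)$ in any way. The paper itself exhibits admissible simple transitive $\mathscr{D}$-module categories of types $C_\infty$ and $A_\infty^\infty$ whose underlying categories are \emph{not} semi-simple, so no general softness/finiteness condition can do the job. Moreover, your proposed bootstrap does not close even if $\mathrm{End}_{\mathcal{M}}(X_0)\cong\mathbb{C}$ were granted: the adjunction gives, e.g., $\mathrm{End}(X_1)\cong\mathrm{End}(X_0)\oplus\mathrm{Hom}(X_0,X_2)$, and nothing in the adjunction plus Clebsch--Gordan forces $\mathrm{Hom}(X_0,X_2)=0$.

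The missing mechanism is \emph{simple transitivity}, which you never actually use. The paper's proof passes to the abelianization $\overline{\mathcal{M}}$, compares the split-Grothendieck multiplicity matrices $[F]$ with the composition-multiplicity matrices $\llbracket F\rrbracket$ on simple tops using biadjointness of $F$ and $F^*$ (the transposition lemma \cite[Lemma~8]{AM}), and deduces that applying $F$ to the short exact sequence $0\to\mathrm{Rad}(M_q)\to M_q\to N_q\to 0$ identifies $F(\mathrm{Rad}(M_q))$ with $\mathrm{Rad}(F(M_q))$. Hence the radical is a $\mathscr{D}$-invariant ideal, and simple transitivity forces it to vanish, so $\mathcal{M}$ is semi-simple; only then do all the Hom-space computations you want become automatic and the evaluation functor an equivalence. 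The special role of the $A_\infty$ combinatorics (equivalently, of $[\mathcal{M}]_\oplus\cong[\mathscr{D}]_\oplus$) is to supply the identity-like object $I=X_0$ with $F(I)$ indecomposable for every indecomposable $F$, which is what makes the top-versus-summand comparison work. To repair your proof you should replace the appeal to admissibility by this invariant-ideal argument.
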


The following result, which 
is \cite[Proposition~20]{MZ1}, provides an interesting representation
theoretic property of the combinatorial type $D_\infty$, even if this
result is not as strong as the uniqueness result in type $A_\infty$ presented above.

\begin{proposition}\label{prop-s3.6-2}
The underlying category of any admissible simple transitive  $\mathscr{D}$-module 
category of type $D_\infty$  is semi-simple.
\end{proposition}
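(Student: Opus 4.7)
The plan is to combine the multiplicity-freeness of the $D_\infty$ action graph with simple transitivity to show that the Jacobson radical of $\mathscr{M}$ vanishes. First, label the indecomposables of $\mathscr{M}$ as $X_1, X_1', X_2, X_3, \ldots$ along the $D_\infty$ graph, with $X_1,X_1'$ the two leaves attached to the branching node $X_2$; from the action graph one reads off $V \otimes X_1 \cong X_2 \cong V \otimes X_1'$, $V \otimes X_2 \cong X_1 \oplus X_1' \oplus X_3$, and $V \otimes X_n \cong X_{n-1} \oplus X_{n+1}$ for $n \geq 3$. Crucially, every such decomposition is multiplicity-free, i.e.\ a direct sum of pairwise non-isomorphic indecomposables.

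By admissibility, $\mathscr{M}$ is a $\Bbbk$-linear Krull-Schmidt category, so its Jacobson radical $\mathcal{R}=\operatorname{rad}(\mathscr{M})$ is a well-defined proper two-sided ideal. The heart of the argument is to show $\mathcal{R}$ is $\mathscr{D}$-stable, i.e.\ closed under $V\otimes-$ (which suffices since $\mathscr{D}$ is monoidally generated by $V$). For an endomorphism $f \in \operatorname{rad}(\operatorname{End}(X_i))$, multiplicity-freeness makes $\operatorname{End}(V\otimes X_i) = \prod_k \operatorname{End}(X_{i_k})$ a \emph{product} of local rings (no matrix-ring contributions), and $(V\otimes f)^n = V\otimes f^n = 0$ forces every component of $V\otimes f$ to be nilpotent, hence in the radical. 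For $f\colon X_i\to X_j$ with $X_i\not\cong X_j$, decompose $V\otimes f$ along the two multiplicity-free decompositions; components between non-isomorphic summands lie in $\mathcal{R}$ automatically, while the components that are endomorphisms of a \emph{shared} indecomposable summand are controlled by the adjunction isomorphism $\Hom(V\otimes X_i, V\otimes X_j)\cong \Hom(X_i,X_j)\oplus \Hom(X_i, W\otimes X_j)$ coming from $V\otimes V\cong\mathbf{1}\oplus W$ (with $W$ the $3$-dimensional simple $\mathfrak{sl}_2$-module), under which $V\otimes f$ corresponds to $(f,0)$ and is thereby traced back to the original $f\in\mathcal{R}$. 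Having established $\mathscr{D}$-stability, simple transitivity forces $\mathcal{R}=0$, yielding $\operatorname{End}(X_i)=\Bbbk$ for all $i$ and $\Hom(X_i,X_j)=0$ whenever $X_i\not\cong X_j$; thus $\mathscr{M}$ is semi-simple.

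The main obstacle is the analysis of the shared-summand components above, where one must reconcile two different decompositions of $\Hom(V\otimes X_i, V\otimes X_j)$: the adjunction decomposition along $V\otimes V\cong\mathbf{1}\oplus W$ and the decomposition along the multiplicity-free summands of $V\otimes X_i$ and $V\otimes X_j$, and conclude that an element corresponding to $(f,0)$ in the former has each of its shared-summand components in the radical for the latter. The multiplicity-one structure of $D_\infty$ is essential here; by contrast, doubled edges (as in $C_\infty$) or loops (as in $T_\infty$) would allow $V\otimes -$ to mix distinct copies of a shared indecomposable and produce an invertible component from a radical morphism. This is consistent with the known non-semi-simple realizations of types $C_\infty$ (projective-injectives in the integral part of category $\mathcal{O}$) and $T_\infty$, which do not satisfy the conclusion of the proposition.
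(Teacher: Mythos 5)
The survey itself does not prove this statement; it quotes it from \cite[Proposition~20]{MZ1}, so your proposal can only be assessed on its own terms. Your overall strategy is the standard and correct one: the radical $\mathcal{R}$ of the Krull--Schmidt category contains no identity morphisms of non-zero objects, so if it is stable under $V\otimes{}_-$ it is a proper $\mathscr{D}$-invariant ideal and simple transitivity forces $\mathcal{R}=0$, which gives semi-simplicity. Your treatment of endomorphisms is essentially right, modulo the slip that $\mathrm{End}(V\otimes X_i)$ is a product of local rings only after passing to the quotient by its radical; the argument you actually need is that a nilpotent element of a finite dimensional algebra whose semi-simple quotient is a product of division rings lies in the radical, and this is indeed where multiplicity-freeness enters.

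The step you yourself call ``the main obstacle'' is, however, a genuine gap and not a technicality. For $f\in\mathcal{R}(X_i,X_j)$ with $X_i\not\cong X_j$ and a common neighbour $A$ of $X_i$ and $X_j$ in the $D_\infty$ graph, you must show that the component $(V\otimes f)_{A,A}\in\mathrm{End}(A)$ is not invertible, and the only tool you offer is that the adjoint of $V\otimes f$ under $\Hom(V\otimes X_i,V\otimes X_j)\cong\Hom(X_i,X_j)\oplus\Hom(X_i,W\otimes X_j)$ equals $(cf,0)$. That adjunction is merely a linear isomorphism and is not compatible in any evident way with the block decomposition of $\Hom(V\otimes X_i,V\otimes X_j)$ indexed by indecomposable summands, so nothing about the $(A,A)$-block follows from it. A sanity check that the argument is incomplete: for a radical morphism $X_1\to X_2$ in type $T_\infty$ the decompositions $V\otimes X_1\cong X_1\oplus X_2$ and $V\otimes X_2\cong X_1\oplus X_3$ are equally multiplicity-free, the shared summand $X_1$ occurs once on each side, and the adjoint of $V\otimes f$ is again $(cf,0)$; every ingredient you invoke is present there, yet the conclusion of the proposition is not expected to hold. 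What actually distinguishes $D_\infty$ is the absence of loops and triangles, so that shared summands occur only for vertices at distance two, and a genuine argument is still required at exactly that point. For the two leaves one can close it: if $(V\otimes f)_{X_2,X_2}$ were invertible for $f\colon X_1\to X_1'$ or $f\colon X_1\to X_3$, then $V\otimes f$ would be a split monomorphism because $V\otimes X_1\cong X_2$ is indecomposable, hence so would $(V\otimes V)\otimes f\cong f\oplus(W\otimes f)$ be, forcing the radical morphism $f$ between non-isomorphic indecomposables to be a split monomorphism, a contradiction. For interior vertices $V\otimes X_n$ is decomposable, this shortcut is unavailable, and your proposal supplies no replacement; as it stands the proof is not complete.
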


In  type $C_\infty$, the underlying category of the simple 
transitive  $\mathscr{D}$-module category $\mathrm{add}(\mathscr{D}\cdot L(-1))$,
mentioned in Subsection~\ref{s3.3}, is not semi-simple.

In  type $A_\infty^\infty$, a very interesting example appears
in the setup described in Subsection~\ref{s3.4}. We consider 
$\mathfrak{g}$ and its Borel subalgebra $\mathfrak{b}$. Recall the 
$\mathfrak{b}$-modules $M_\lambda$, where $\lambda\in\mathbb{C}$.
Fix $\lambda$ and let $\mathcal{N}_\lambda$ denote the additive closure of
all $M_{\lambda+i}$, where $i\in \mathbb{Z}$. Then 
$\mathcal{N}_\lambda$ is stable under the action of $\mathscr{D}$. In fact,
we have:

\begin{proposition}\label{prop-s3.6-3}
For any $\lambda\in\mathbb{C}$, the $\mathscr{D}$-module 
category $\mathcal{N}_\lambda$ is simple transitive of type $A_\infty^\infty$.
\end{proposition}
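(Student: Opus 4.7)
The plan is to do everything on the back of a single isomorphism:
\begin{displaymath}
V\otimes_{\mathbb{C}}M_{\mu}\;\cong\;M_{\mu+1}\oplus M_{\mu-1}
\end{displaymath}
of $\mathfrak{b}$-modules, for every $\mu\in\mathbb{C}$. I would derive this from the tensor identity for induction: writing $\mathfrak{h}:=\langle h\rangle$ and $M_\mu=\mathrm{Ind}_{\mathfrak{h}}^{\mathfrak{b}}\mathbb{C}_\mu$, one has
\begin{displaymath}
V\otimes_{\mathbb{C}}M_\mu\;\cong\;\mathrm{Ind}_{\mathfrak{h}}^{\mathfrak{b}}\bigl(V|_{\mathfrak{h}}\otimes_{\mathbb{C}}\mathbb{C}_\mu\bigr),
\end{displaymath}
and $V|_{\mathfrak{h}}\cong\mathbb{C}_{1}\oplus\mathbb{C}_{-1}$, so the right-hand side equals $M_{\mu+1}\oplus M_{\mu-1}$. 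Two things follow at once. First, $\mathcal{N}_\lambda$ is stable under tensoring with $V$, hence (since $V$ monoidally generates $\mathscr{D}$ and tensoring is additive) under the whole $\mathscr{D}$-action, so $\mathcal{N}_\lambda$ is a $\mathscr{D}$-module category. Second, the multiplicity $[V\otimes_{\mathbb{C}}M_{\lambda+i}:M_{\lambda+j}]$ equals $1$ if $|i-j|=1$ and $0$ otherwise, so after combining pairs of opposite arrows as in Subsection~\ref{s3.2}, the action graph $\Gamma_{\mathcal{N}_\lambda}$ is the doubly-infinite unoriented chain $\cdots-M_{\lambda-1}-M_{\lambda}-M_{\lambda+1}-\cdots$, which is precisely $A_{\infty}^{\infty}$.

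I would next verify that the $M_{\lambda+i}$ for $i\in\mathbb{Z}$ exhaust the isomorphism classes of indecomposable objects of $\mathcal{N}_\lambda$ and are pairwise non-isomorphic, so that the graph just computed really is $\Gamma_{\mathcal{N}_\lambda}$. Frobenius reciprocity gives
\begin{displaymath}
\dim\mathrm{Hom}_{\mathfrak{b}}(M_\mu,M_\nu)\;=\;\dim\mathrm{Hom}_{\mathfrak{h}}(\mathbb{C}_\mu,M_\nu|_{\mathfrak{h}})\;=\;\begin{cases}1,&\mu-\nu\in 2\mathbb{Z}_{\geq 0},\\ 0,&\text{otherwise},\end{cases}
\end{displaymath}
since the $h$-weights of $M_\nu$ are $\nu,\nu+2,\nu+4,\dots$, each of multiplicity one. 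In particular $\mathrm{End}_{\mathfrak{b}}(M_\mu)\cong\mathbb{C}$, which gives indecomposability of $M_\mu$ (as already recorded in Subsection~\ref{s3.4}), and the unique minimal $h$-weight of $M_\mu$ is $\mu$, so distinct $\mu$ give non-isomorphic $M_\mu$. Combining with the tensor calculation, each $M_{\lambda+i}$ appears as a direct summand of $V^{\otimes|i|}\otimes_{\mathbb{C}}M_\lambda$, so $\mathcal{N}_\lambda=\mathrm{add}(\mathscr{D}\cdot M_\lambda)$ and $\Gamma_{\mathcal{N}_\lambda}$ is strongly connected; the latter gives transitivity of the $\mathscr{D}$-action in the sense of \cite{MM5}.

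The remaining and most delicate step is to verify simple transitivity, that is, the absence of proper non-zero $\mathscr{D}$-invariant ideals in $\mathcal{N}_\lambda$. By the Hom calculation, every non-zero morphism between indecomposables is, up to scalar, a canonical inclusion $\iota_{\mu,k}\colon M_\mu\hookrightarrow M_{\mu-2k}$ for some $k\geq 0$, sending the generator $m_\mu$ to $e^k m_{\mu-2k}$, and $\iota_{\mu,0}=\mathrm{id}_{M_\mu}$. My plan is to show that if a $\mathscr{D}$-invariant ideal $\mathcal{I}$ contains some $\iota_{\mu,k}$ with $k>0$, then $\mathcal{I}$ also contains a non-zero scalar multiple of $\iota_{\mu-1,k-1}$; iterating $k$ times then places an identity into $\mathcal{I}$, after which the transitivity just established promotes this to $\mathrm{id}_{M_\nu}\in\mathcal{I}$ for all $\nu\in\lambda+\mathbb{Z}$ (via summand inclusions and projections inside objects of the form $F\otimes_{\mathbb{C}}M_{\mu'}$, $F\in\mathscr{D}$), forcing $\mathcal{I}$ to be the full ideal. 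To reduce the gap, I would form $\mathrm{id}_V\otimes\iota_{\mu,k}\in\mathcal{I}$ and pre-/post-compose with the canonical summand inclusion $M_{\mu-1}\hookrightarrow V\otimes_{\mathbb{C}}M_\mu$ and projection $V\otimes_{\mathbb{C}}M_{\mu-2k}\twoheadrightarrow M_{\mu-2k+1}$ to extract the $(M_{\mu-1},M_{\mu-2k+1})$-component, which has gap $2(k-1)$. The main obstacle is to verify that this component is genuinely non-zero; this reduces to an explicit computation with the decomposition of the first step at the level of weight generators: following the weight-$(\mu-1)$ generator $v_{-1}\otimes m_\mu$ of $M_{\mu-1}\subset V\otimes_{\mathbb{C}}M_\mu$ through $\mathrm{id}_V\otimes\iota_{\mu,k}$ and decomposing its image $v_{-1}\otimes e^k m_{\mu-2k}$ in $V\otimes_{\mathbb{C}}M_{\mu-2k}$ yields, by a short induction on $k$, a non-zero scalar multiple of $e^{k-1}m_{\mu-2k+1}$ in the $M_{\mu-2k+1}$-summand, completing the inductive step.
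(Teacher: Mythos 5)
Your argument is correct and complete; note that the paper itself states Proposition~\ref{prop-s3.6-3} without proof (it is imported from \cite{MZ1}), so there is no in-text argument to compare against, but every step of your plan checks out. The tensor identity indeed gives $V\otimes_{\mathbb{C}}M_\mu\cong M_{\mu+1}\oplus M_{\mu-1}$, Frobenius reciprocity gives the stated one-dimensional Hom spaces (so all nonzero maps between indecomposables are the $\iota_{\mu,k}$, and the $M_\mu$ are indecomposable and pairwise non-isomorphic), and the combinatorics is visibly $A_\infty^\infty$. For the only delicate point, the non-vanishing of the extracted component, one can make your computation completely explicit: with $\nu=\mu-2k$, $g_-=v_{-1}\otimes m_\nu$ and $g_+=v_1\otimes m_\nu$ generating the two summands of $V\otimes_{\mathbb{C}}M_\nu$, one has $e^kg_-=v_{-1}\otimes e^km_\nu+k\,v_1\otimes e^{k-1}m_\nu$, hence $v_{-1}\otimes e^km_\nu=e^kg_--k\,e^{k-1}g_+$, so the $(M_{\mu-1},M_{\mu-2k+1})$-component of $\mathrm{id}_V\otimes\iota_{\mu,k}$ equals $-k\,\iota_{\mu-1,k-1}$, which is nonzero precisely when $k\geq 1$. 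Iterating drops an identity morphism into any nonzero $\mathscr{D}$-invariant ideal, and strong connectedness then forces the ideal to be everything, as you say. The one cosmetic remark is that your final step is a direct computation rather than an induction on $k$, but that only makes the argument cleaner.
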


The underlying category of $\mathcal{N}_\lambda$ is not semi-simple.
We can consider an abelianization $\overline{\mathcal{N}_\lambda}$ 
of this category (see Subsection~\ref{s5.4} for details)
which is also, naturally, a $\mathscr{D}$-module category.
The original $\mathcal{N}_\lambda$ is a subcategory of 
$\overline{\mathcal{N}_\lambda}$, in fact, it is equivalent to the
category of projective objects in $\mathcal{N}_\lambda$.
Notably, all non-zero objects of $\mathcal{N}_\lambda$ have infinite length.
It turns out that the full subcategory of $\overline{\mathcal{N}_\lambda}$
consisting of objects of finite length is invariant under the action of 
$\mathscr{D}$. Consequently, applying $\mathscr{D}$ to simple objects in
$\overline{\mathcal{N}_\lambda}$ will never output a non-zero projective
object. This seems to be the first example of such a phenomenon
(compare with \cite[Theorem~2]{KMMZ}).

Our final interesting observation in \cite{MZ1} is the following observation,
which is \cite[Proposition~23]{MZ1}, about the type $B_\infty$.
It says that this type is not realizable in our setups.

\begin{proposition}\label{prop-s3.6-4}
Simple transitive $\mathscr{D}$-module categories of
type $B_\infty$ over the complex numbers whose underlying
category is locally finitary and has weak kernels
do not exist.
\end{proposition}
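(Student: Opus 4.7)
The plan is to suppose for contradiction that such an $\mathcal{M}$ of type $B_\infty$ exists, label the indecomposables $X_1,X_2,X_3,\ldots$ so that $X_1$ is the leaf and the asymmetric edge lies between $X_1$ and $X_2$, and construct a non-trivial proper $\mathscr{D}$-stable ideal, contradicting simple transitivity.

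First I would extract information from the self-duality $V\cong V^*$, which gives the adjunction $\Hom_{\mathcal{M}}(V\otimes X_i,X_j) \cong \Hom_{\mathcal{M}}(X_i, V\otimes X_j)$. Applied to the pairs built from $\{X_1,X_2,X_3\}$, using the $B_\infty$ decompositions $V\otimes X_1=X_2^{\oplus 2}$ and $V\otimes X_2=X_1\oplus X_3$, together with the fact that endomorphism rings of indecomposables in a locally finitary category over the algebraically closed field $\mathbb{C}$ are local finite-dimensional $\mathbb{C}$-algebras, one obtains $\Hom(X_1,X_3)=\Hom(X_3,X_1)=0$ and the dimension identity $\dim\operatorname{End}(X_1) = 2\dim\operatorname{End}(X_2)$. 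Since both sides are positive integers, $\dim\operatorname{End}(X_1)\geq 2$, and locality forces $\operatorname{End}(X_1)\cong\mathbb{C}[t]/(t^2)$ for some non-zero nilpotent $t$.

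Next, let $\mathcal{I}$ be the smallest two-sided $\mathscr{D}$-stable ideal in $\mathcal{M}$ that contains $t$. Since $t\neq 0$, $\mathcal{I}\neq 0$, so by simple transitivity $\mathcal{I}$ must equal the full ideal of morphisms, and in particular $\operatorname{id}_{X_1}\in\mathcal{I}$. The strategy is to track $t$ and its $\mathscr{D}$-iterates $V_n\otimes t$ and show that this is impossible. The essential computation concerns $V\otimes t\in \operatorname{End}(X_2^{\oplus 2})=M_2(\mathbb{C})$ (using $\dim\operatorname{End}(X_2)=1$). If $V\otimes t=0$, all higher iterates vanish, so $\mathcal{I}\cap\operatorname{End}(X_1)=\mathbb{C}\cdot t$, and $\operatorname{id}_{X_1}=1\notin\mathbb{C}\cdot t$ yields the contradiction.

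The hard part will be the remaining case $V\otimes t\neq 0$: then $V\otimes t$ is a rank-one nilpotent in the simple algebra $M_2(\mathbb{C})$, hence generates all of $M_2(\mathbb{C})$ as a two-sided ideal there, and propagating through $V\otimes\operatorname{id}_{X_2}=\operatorname{id}_{X_1}\oplus\operatorname{id}_{X_3}$ formally recovers $\operatorname{id}_{X_1}\in\mathcal{I}$, defeating the naive strategy. To complete the argument in this case, I would pass to the abelianization $\overline{\mathcal{M}}$ furnished by the weak-kernel hypothesis: on the simples of $\overline{\mathcal{M}}$, which have one-dimensional endomorphism rings over $\mathbb{C}$, the self-adjointness of $V\otimes-$ imposes strong constraints linking the top and socle multiplicities of $V\otimes L_i$, and using the Cartan matrix to relate these to the asymmetric $B_\infty$-multiplicities on projectives should yield a contradiction. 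This final compatibility check is where the most delicate bookkeeping lies.
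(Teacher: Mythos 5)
Your proposal has genuine gaps, the largest of which you acknowledge yourself. (The survey only cites this statement from \cite[Proposition~23]{MZ1} without reproducing a proof, so I judge your argument on its own terms.) First, the numerical claims at the outset are not justified: the adjunction $\Hom(V\otimes X_1,X_2)\cong\Hom(X_1,V\otimes X_2)$ yields only the single relation $2\dim\mathrm{End}(X_2)=\dim\mathrm{End}(X_1)+\dim\Hom(X_1,X_3)$ (and its mirror with $\Hom(X_3,X_1)$), so splitting this into ``$\Hom(X_1,X_3)=0$'' and ``$\dim\mathrm{End}(X_1)=2\dim\mathrm{End}(X_2)$'' is an assumption, not a consequence; locality of the endomorphism rings says nothing about Hom-spaces between non-isomorphic indecomposables. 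Likewise $\dim\mathrm{End}(X_2)=1$ is asserted without proof, and without it $\mathrm{End}(X_1)\cong\mathbb{C}[t]/(t^2)$ does not follow. Second, and decisively, the case $V\otimes t\neq 0$ is left open: as you concede, the $\mathscr{D}$-stable ideal generated by a radical element may legitimately be everything (this happens in the realizable type $C_\infty$ example $\mathrm{add}(\mathscr{D}\cdot L(-1))$, whose underlying category is not semi-simple), so ``$\mathrm{id}_{X_1}\in\mathcal{I}$'' is not by itself absurd, and the promised ``delicate bookkeeping'' in the abelianization is precisely the missing proof.

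The tool you gesture at in your final paragraph is in fact sufficient on its own and makes the entire endomorphism-ring analysis unnecessary. Admissibility allows one to form $\overline{\mathcal{M}}$, and \cite[Lemma~8]{AM} (used in the proof of Theorem~\ref{thm-s5.4-1}) gives $\llbracket V\rrbracket=[V]^{T}$ on the simples $L_i$. For the $B_\infty$ matrix ($[V\otimes X_1:X_2]=2$, $[V\otimes X_2:X_1]=1$) this forces $V\otimes L_1$ to have the single composition factor $L_2$, hence $V\otimes L_1\cong L_2$ is simple; consequently $\dim\Hom(L_1,V\otimes L_2)=\dim\Hom(V\otimes L_2,L_1)=\dim\mathrm{End}(L_2)=1$, while $[V\otimes L_2:L_1]=\llbracket V\rrbracket_{21}=2$ and $[V\otimes L_2:L_3]=1$. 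Since $\mathbf{1}$ is a direct summand of $V\otimes V$, the object $L_1$ splits off $V\otimes L_2\cong V\otimes V\otimes L_1$, and the complementary summand $L(2)\otimes L_1$ is then a length-two object having $L_1$ as a composition factor but admitting no nonzero morphism to or from $L_1$ --- impossible, since a length-two object is either semisimple or uniserial, and in either case each composition factor occurs in its top or its socle. I would rebuild the proof around this computation rather than around ideal generation.
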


The type $B_\infty$ combinatorics is, by definition, dual to the type
$C_\infty$ combinatorics. Therefore one can find the type $B_\infty$ combinatorics
by considering the basis of simple (instead of projective) modules
in the examples which  realize the type $C_\infty$ combinatorics.

\section{$\mathfrak{sl}_3$-combinatorics}\label{s4}

\subsection{Setup}\label{s4.1}

In \cite{MZ2}, we tried to generalize (some of) the results of 
\cite{MZ1} to the case of the Lie algebra $\mathfrak{sl}_3(\mathbb{C})$.
So, in this section, we let $\mathfrak{g}=\mathfrak{sl}_3$.
We denote by $\mathscr{B}$ the monoidal category of finite
dimensional $\mathfrak{g}$-modules with the usual monoidal structure.

The category $\mathscr{B}$ is generated, in a weak sense,
by the natural $\mathfrak{g}$-module $V:=\mathbb{C}^3$. Here,
by a weak sense, we mean that any indecomposable object in
$\mathscr{B}$ is a summand of some tensor power of $V$.
If we, additionally, consider the dual $\mathfrak{g}$-module $V^*$,
then $\mathscr{B}$ is generated, as a monoidal category, by $V$
and $V^*$ in the following, much stronger, sense: we can enumerate
indecomposable objects of $\mathscr{B}$ by positive integers,
say $B_1,B_2,\dots$ such that, for each $i\in\mathbb{Z}_{>0}$, 
there exist $a,b\in\mathbb{Z}_{\geq 0}$ with the property that
$V^{\otimes a}\otimes (V^*)^{\otimes b}$ has $B_i$ as a summand
with multiplicity  $1$ and all other summands are isomorphic
to $B_j$, for $j<i$. We also note that, given a monoidal action of
$\mathscr{B}$ on any $\mathscr{B}$-module category, the objects
$V$ and $V^*$ necessarily act as biadjoint functors. 

Given a simple $\mathfrak{g}$-module $L$, the category 
$\mathrm{add}(\mathscr{B}\cdot L)$ is an idempotent split 
Krull-Schmidt category with countably many indecomposable objects
and finite dimensional morphism spaces. The category
$\mathrm{add}(\mathscr{B}\cdot L)$ has the obvious
structure of a $\mathscr{B}$-module category. Let
$X_1,X_2,\dots$ be a complete and irredundant list of 
representatives of the isomorphism classes of indecomposable
objects in the category $\mathrm{add}(\mathscr{B}\cdot L)$. We consider two
oriented graphs, $\Gamma_L$ and $\Gamma^*_L$. For both of them,
the set of vertices is in bijection with the $X_i$'s. For
$\Gamma_L$, the number of oriented edges from $X_i$ to $X_j$
equals the multiplicity of $X_j$ as a summand of $V\otimes_{\mathbb{C}}X_i$.
Similarly, for $\Gamma^*_L$, the number of oriented edges from $X_i$ to $X_j$
equals the multiplicity of $X_j$ as a summand of $V^*\otimes_{\mathbb{C}}X_i$.
From the previous paragraph, it follows that the graphs 
$\Gamma_L$ and $\Gamma^*_L$ completely determine the combinatorics
of the action on $\mathscr{B}$ on $\mathrm{add}(\mathscr{B}\cdot L)$
in the sense that, for every object $B\in \mathscr{B}$ and all
$i,j$, the multiplicity of $X_j$ in $B\otimes_{\mathbb{C}}X_i$
is uniquely determined. Consequently, the problem to classify
all possible $\Gamma_L$ and $\Gamma^*_L$ is natural and interesting.

\subsection{Main results}\label{s4.2}

The main result of \cite{MZ2} is \cite[Theorem~20]{MZ2}, which can be formulated
as follows:

\begin{theorem}\label{thm-s4.2-1}
For a simple $\mathfrak{g}$-module $L$, any strongly connected component of 
the graph $\Gamma_L$ is isomorphic to one of the graphs in Figure~\ref{fig1},
with the graphs in Figure~\ref{fig2} describing the corresponding
strongly connected component of $\Gamma^*_L$.
\end{theorem}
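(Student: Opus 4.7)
The plan is to exploit the rigidity of the monoidal category $\mathscr{B}$ and the fact that the pair $(V,V^*)$ carries essentially all of the monoidal structure. First I would use the biadjunction between $V\otimes-$ and $V^*\otimes-$: Frobenius reciprocity gives
$$\dim\Hom(V\otimes X_i,X_j)=\dim\Hom(X_i,V^*\otimes X_j),$$
and symmetrically. Combined with the Krull--Schmidt property of $\mathrm{add}(\mathscr{B}\cdot L)$ and a computation of the endomorphism rings of the $X_i$, this forces a tight interlocking between $\Gamma_L$ and $\Gamma^*_L$ at every vertex, and is the source of the pairing of Figure~\ref{fig1} with Figure~\ref{fig2}.

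Second, I would encode the decomposition identities in $\mathscr{B}$ — in particular $V\otimes V\cong S^2V\oplus V^*$, $V\otimes V^*\cong\mathbb{C}\oplus\mathfrak{sl}_3$, and the $V\leftrightarrow V^*$ duals — as rewriting rules for concatenated edges in the two graphs. Since the Grothendieck ring of $\mathscr{B}$ is generated by $[V]$ and $[V^*]$, the pair of monoidal actions must descend to a representation of this ring on the split Grothendieck group of $\mathrm{add}(\mathscr{B}\cdot L)$. This severely restricts the local combinatorics at each vertex: the outgoing valency at each $X_i$ in $\Gamma_L$ is at most three (the three weights of $V$), and the composition of a $V$-edge followed by a $V^*$-edge is pinned down by the identity summand of $V\otimes V^*$.

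Third, I would use that $\mathrm{add}(\mathscr{B}\cdot L)$ is generated by the single simple object $L$, so the distinguished vertex $[L]$ is directly accessible: the indecomposable summands of $V\otimes L$ and $V^*\otimes L$ are read off inside $\mathfrak{g}\text{-}\mathrm{Mod}$ from the block of $L$ via the translation principle. I would then case-split on the annihilator-type of $L$ (finite dimensional; infinite-dimensional highest weight of generic, integral-regular, or integral-singular type; cuspidal; Whittaker; and the degenerate boundary cases), compute the local pattern at $[L]$ in each case, and propagate it under iterated application of $V\otimes-$ and $V^*\otimes-$ subject to the adjunction and rewriting constraints from the first two steps. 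At every stage the new indecomposables so produced are identified with explicit modules whose block and annihilator are known, which closes the combinatorial bookkeeping.

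The main obstacle will be \emph{exhaustiveness}: showing that the eight graph-pairs cover all possibilities, with no exotic strongly connected component escaping when tensor products yield non-semisimple indecomposables, or when $L$ has highly non-generic annihilator. I expect to handle this by combining the weak Jordan--H\"older theory of \cite{MM5} for transitive $\mathscr{B}$-actions with Duflo's classification of primitive ideals in $U(\mathfrak{sl}_3)$: the central character and annihilator of $L$ fix only finitely many discrete invariants, and a careful case-by-case verification then shows that each invariant-pattern gives exactly one of the listed strongly connected graphs, with the $\Gamma^*_L$-partner dictated by the biadjunction of the first step.
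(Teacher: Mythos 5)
This theorem is quoted from \cite[Theorem~20]{MZ2}; the present paper contains no proof of it, so your proposal has to be measured against the strategy of that reference, which reduces everything to the classification of indecomposable projective functors of \cite{BG}, Kostant's multiplicity formula, and the cell combinatorics of the (possibly singular) integral Weyl group attached to the central character of $L$. Your outline points at much of the right toolbox, but it has two genuine gaps. First, your case analysis is organized around a list of ``types'' of simple modules (finite dimensional, highest weight, cuspidal, Whittaker, boundary cases). Simple $\mathfrak{sl}_3$-modules are not classifiable (the problem is wild), so no such list can be exhaustive; the argument must instead show that $\Gamma_L$ depends only on coarse invariants of $L$ --- its central character (which every simple module has, by \cite[Proposition~2.6.8]{Di}) refined by its annihilator --- and that for $\mathfrak{sl}_3$ these invariants take only finitely many combinatorial values (trivial, $S_2$ or $S_3$ integral Weyl group, regular or singular, and the finitely many left cells or $\tau$-invariants in each case). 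That reduction is a theorem to be proved, not an observation, and it is precisely the step your plan leaves blank.

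Second, two of your local constraints do not deliver what you need. Frobenius reciprocity does give $\dim\mathrm{Hom}(V\otimes X_i,X_j)=\dim\mathrm{Hom}(X_i,V^*\otimes X_j)$, but when $X_i$ and $X_j$ are not simple these dimensions are not the direct-summand multiplicities defining $\Gamma_L$ and $\Gamma_L^*$ --- the paper makes exactly this caveat in Subsection~\ref{s3.2}, and the correct statement (as used in the proof of Theorem~\ref{thm-s5.4-1} via \cite[Lemma~8]{AM}) relates summand multiplicities for $V$ to \emph{composition} multiplicities of simple tops for $V^*$, so extra input is needed before the pairing of Figures~\ref{fig1} and~\ref{fig2} follows. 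Likewise, the relations $V\otimes V\cong S^2V\oplus V^*$ and $V\otimes V^*\cong\mathbb{C}\oplus\mathfrak{sl}_3$ only constrain the action on the split Grothendieck group; the propagation step requires knowing how compositions of indecomposable projective functors decompose into indecomposable ones, which is where Kostant's $\dim M_\nu$ formula and the cell combinatorics of $S_3$ actually enter, and your ``out-valency at most three'' claim is a consequence of that finer theory rather than of the three weights of $V$ alone. With these two gaps filled, the remaining case-by-case verification is indeed the shape of the argument in \cite{MZ2}.
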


\begin{figure}
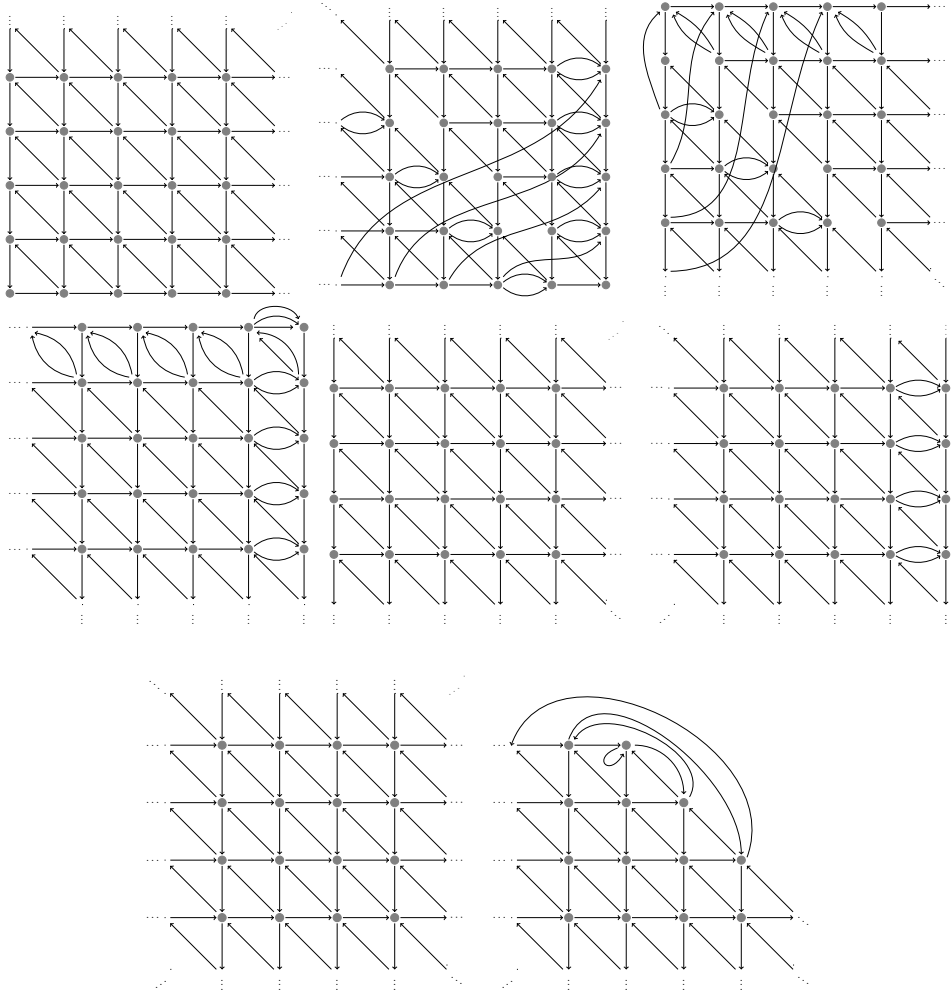

\resizebox{\textwidth}{!}{
%figure 3
% [inline block 0: 16 envs, 91788 chars -> data_tex | \begin{tikzpicture} \draw[black, thick,  ->] (0.2,0) -- (1.8,0);...]

}
\caption{The graphs $\Gamma^*_L$}\label{fig2}
\end{figure}

\subsection{Additional results}\label{s4.3}

Similarly to the $\mathfrak{sl}_2$-case, some combinatorial patterns 
provide additional rep\-re\-sen\-ta\-ti\-on-theoretic information.
The following is \cite[Theorem~1]{MZ2} and is an analogue of 
Proposition~\ref{prop-s3.6-1}.

\begin{proposition}\label{prop-s4.3-1}
Let $\mathcal{M}$ be a simple transitive 
admissible $\mathscr{B}$-module category
whose combinatorics of the action of $V$ is given by the 
first graph in Figure~\ref{fig1} and, respectively, 
by the first graph in Figure~\ref{fig2}, for $V^*$.
Then $\mathcal{M}$ is equivalent to the left regular 
$\mathscr{B}$-module category ${}_\mathscr{B}\mathscr{B}$.
\end{proposition}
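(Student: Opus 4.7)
The plan is to construct an explicit equivalence of $\mathscr{B}$-module categories $F\colon{}_\mathscr{B}\mathscr{B}\to\mathcal{M}$. The combinatorial data of the first graph in Figure~\ref{fig1} coincides with the Bratteli diagram of $V$-tensoring on simple finite-dimensional $\mathfrak{sl}_3$-modules, indexed by dominant integral weights $(a,b)\mapsto L(a\omega_1+b\omega_2)$, with the corner vertex $(0,0)$ corresponding to the trivial module $\mathbb{1}$. This corner is the unique vertex having a single outgoing $V$-edge and, using the first graph of Figure~\ref{fig2}, a single outgoing $V^*$-edge, so it canonically picks out a distinguished indecomposable $X_0\in\mathcal{M}$.

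First I would set $F(B):=B\cdot X_0$ and extend in the evident way to morphisms, producing a $\mathscr{B}$-module functor. Essential surjectivity follows immediately from simple transitivity. Admissibility should force $\mathrm{End}_\mathcal{M}(X_0)\cong\mathbb{C}$ (indecomposability together with the finiteness and semisimplicity properties bundled into admissibility), making $X_0$ behave as a monoidal unit.

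For fully faithfulness, I would use that $V$ and $V^*$ act biadjointly in every $\mathscr{B}$-action, as noted in Subsection~\ref{s4.1}. Taking duals in $\mathscr{B}$ and applying Frobenius reciprocity reduces the verification of
\[
\dim\mathrm{Hom}_\mathcal{M}(B\cdot X_0,B'\cdot X_0)=\dim\mathrm{Hom}_\mathscr{B}(B,B')
\]
to showing that, for every simple $C\in\mathscr{B}$, the multiplicity of $X_0$ as a summand of $C\cdot X_0$ equals $1$ if $C\cong\mathbb{1}$ and $0$ otherwise. This is a purely graph-theoretic statement about $\Gamma_L$ and $\Gamma^*_L$, and it should hold because these graphs match the corresponding regular graphs.

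The main obstacle is exactly this last combinatorial verification: one must argue that starting from the corner $X_0$, no sequence of $V$- and $V^*$-tensorings can return to $X_0$ except via a word whose product in $\mathscr{B}$ actually contains the trivial summand. Concretely, this amounts to showing that the fusion graph generated by $V$ and $V^*$ acting on $X_0$ reproduces the $\mathfrak{sl}_3$ Clebsch--Gordan rules; the hypothesis that both $\Gamma_L$ and $\Gamma^*_L$ agree with the regular graphs is exactly what makes this bookkeeping work out. Once this is verified, $F$ is both essentially surjective and fully faithful, hence an equivalence.
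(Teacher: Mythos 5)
The paper does not reprove this proposition directly (it cites \cite[Theorem~1]{MZ2}), but it proves the general analogue, Theorem~\ref{thm-s5.4-1}, and your skeleton (evaluate at a distinguished indecomposable $X_0$, get essential surjectivity from transitivity, get full faithfulness from biadjointness of $V$ and $V^*$) matches that proof's endgame. However, there is a genuine gap at the crucial step. You assert that admissibility forces $\mathrm{End}_{\mathcal{M}}(X_0)\cong\mathbb{C}$ because of ``semisimplicity properties bundled into admissibility.'' Admissibility, as defined in Subsection~\ref{s5.4}, only means idempotent split, finite dimensional morphism spaces and weak kernels; it carries no semisimplicity whatsoever. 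Indeed, the paper exhibits admissible simple transitive module categories whose underlying categories are \emph{not} semisimple (the type $C_\infty$ example $\mathrm{add}(\mathscr{D}\cdot L(-1))$ and the categories $\mathcal{N}_\lambda$ of type $A_\infty^\infty$). For the same reason, your reduction of $\dim\mathrm{Hom}_{\mathcal{M}}(X_0,C\cdot X_0)$ to the \emph{direct summand} multiplicity of $X_0$ in $C\cdot X_0$ is not valid in a general additive category: there can be nonzero radical morphisms from $X_0$ to non-isomorphic indecomposable summands, so hom-space dimensions are not controlled by the fusion graph alone. The ``purely graph-theoretic verification'' you defer to is actually just the hypothesis of the proposition; the real content you are missing is why the hypothesis forces the underlying category to be semisimple.

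The paper supplies exactly this missing step in the proof of Theorem~\ref{thm-s5.4-1}: one passes to the abelianization $\overline{\mathcal{M}}$, uses biadjointness of $V$ and $V^*$ together with \cite[Lemma~8]{AM} to show that the direct-summand multiplicity matrix $[F]$ coincides with the composition-multiplicity matrix $\llbracket F\rrbracket$ on the simple tops, deduces that the action of the monoidal category preserves semisimple objects, and concludes that the radical of $\overline{\mathcal{M}}$ is an invariant ideal, hence zero by \emph{simple} transitivity. Only after establishing that $\mathcal{M}$ is semisimple does the evaluation functor at $X_0$ become fully faithful. Without an argument of this kind (note that it uses simple transitivity essentially, not just transitivity plus the combinatorics), your proof does not go through.
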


\section{Other Lie algebras}\label{s5}

\subsection{Setup}\label{s5.1}

Let now $\mathfrak{g}$ be an arbitrary semi-simple finite dimensional
complex Lie algebra. Consider the monoidal category $\mathscr{F}$ of all
finite dimensional $\mathfrak{g}$-modules in which the monoidal structure is the usual
one, given by tensoring over $\mathbb{C}$ and using the usual 
comultiplication for $U(\mathfrak{g})$. The monoidal unit is the
trivial $\mathfrak{g}$-module. We note that, for any $V\in \mathscr{F}$,
the dual object $V^*\in \mathscr{F}$ is biadjoint to $V$ in $\mathscr{F}$
(sometimes referred to as a dual object in the monoidal sense).
Note that $\mathscr{F}$ is a semi-simple category.

For any simple $\mathfrak{g}$-module $L$ and any $V,V'\in \mathscr{F}$,
we have 
\begin{displaymath}
\dim\mathrm{Hom}_\mathfrak{g}(V\otimes_\mathbb{C} L,V'\otimes_\mathbb{C}L)
<\infty.
\end{displaymath}
Consequently, the additive closure $\mathrm{add}(\mathscr{F}\cdot L)$
is an idempotent split Krull-Schmidt category with finite dimensional 
morphism spaces and countably many indecomposable objects. The category
$\mathrm{add}(\mathscr{F}\cdot L)$ has the natural structure of an
$\mathscr{F}$-module category. It is a natural (but probably very difficult) 
problem to classify, up to equivalence, simple transitive
subquotients of all possible $\mathscr{F}$-module categories of the 
form $\mathrm{add}(\mathscr{F}\cdot L)$. Here we remark 
that we know that already for $\mathfrak{g}=\mathfrak{sl}_2$ there are
infinitely (even uncountably) many such categories. As a first step
towards this very difficult problem, it is natural to understand the
combinatorics of such categories. For the cases
$\mathfrak{g}=\mathfrak{sl}_2$ and $\mathfrak{g}=\mathfrak{sl}_3$,
this is described in Sections~\ref{s3} and \ref{s4}, respectively.

\subsection{Combinatorial setup}\label{s5.2}

Fix a triangular decomposition 
\begin{displaymath}
\mathfrak{g}= \mathfrak{n}_-\oplus \mathfrak{h}\oplus\mathfrak{n}_+
\end{displaymath}
of $\mathfrak{g}$. Here $\mathfrak{h}$ is a Cartan subalgebra and
$\mathfrak{n}_+$ and $\mathfrak{n}_-$ are the Lie subalgebras
corresponding to a fixed splitting of all roots of $\mathfrak{g}$
into positive and negative roots, respectively.

Consider the Grothendieck ring $\mathbf{Gr}(\mathscr{F})$ of
$\mathscr{F}$. Since $\mathscr{F}$ is symmetric, 
the ring $\mathbf{Gr}(\mathscr{F})$ is commutative.
Let $n$ be the rank of $\mathfrak{g}$ and
$\varpi_1,\varpi_2,\dots,\varpi_n$ be the fundamental
weights. Then simple objects of $\mathscr{F}$ are in bijection
with the elements in the $\mathbb{Z}_{\geq 0}$-linear span
of the fundamental weights. We denote this span by 
$\mathfrak{h}^*_\mathrm{idom}$. For each $\lambda\in \mathfrak{h}^*_\mathrm{idom}$,
we denote by $L(\lambda)$ the corresponding simple object in
$\mathscr{F}$ which is the simple highest weight module
(with respect to our choice of the triangular decomposition)
with highest weight $\lambda$.

For $\displaystyle\lambda=\sum_{i=1}^n k_i \varpi_i$ and 
$\displaystyle\mu=\sum_{i=1}^n m_i \varpi_i$, we write $\lambda\leq \mu$
provided that $k_i\leq m_i$, for all $i$.
If $\displaystyle\lambda=\sum_{i=1}^n k_i \varpi_i\in \mathfrak{h}^*_\mathrm{idom}$, then
the object
\begin{equation}\label{eq-s5.2-1}
\bigotimes_{i=1}^nL(\varpi_i)^{\otimes k_i} 
\end{equation}
has a unique summand isomorphic to $L(\lambda)$ and all other summands
are of the form $L(\mu)$, for $\mu<\lambda$. Therefore,
there is a ring isomorphism between $\mathbf{Gr}(\mathscr{F})$
and the polynomial ring $\mathbb{Z}[x_1,x_2,\dots,x_n]$ which sends 
the object of $\mathscr{F}$ given by Formula~\eqref{eq-s5.2-1}  
to $x_1^{k_1}x_2^{k_2}\dots x_n^{k_n}$.
We identify $\mathbf{Gr}(\mathscr{F})$
with $\mathbb{Z}[x_1,x_2,\dots,x_n]$ via this isomorphism.

Now let $L$ be a simple $\mathfrak{g}$-module. 
We consider the split Grothendieck group 
$[\mathrm{add}(\mathscr{F}\cdot L)]_\oplus$.
The action of $\mathscr{F}$ on $\mathrm{add}(\mathscr{F}\cdot L)$
makes $[\mathrm{add}(\mathscr{F}\cdot L)]_\oplus$ into
a $\mathbf{Gr}(\mathscr{F})$-module. The abelian group 
$[\mathrm{add}(\mathscr{F}\cdot L)]_\oplus$ is countably generated.
The $\mathbf{Gr}(\mathscr{F})$-module structure on
$[\mathrm{add}(\mathscr{F}\cdot L)]_\oplus$ is
uniquely determined by the action of the generators
$x_1,x_2,\dots,x_n$ on $[\mathrm{add}(\mathscr{F}\cdot L)]_\oplus$.

Let $P_1,P_2,\dots$ be a complete and irredundant list of 
representatives of the isomorphism classes of indecomposable
objects in $\mathrm{add}(\mathscr{F}\cdot L)$.
The action of each $x_i$ on $[\mathrm{add}(\mathscr{F}\cdot L)]_\oplus$
is given by an infinite matrix $[x_i]$, whose rows and columns are
indexed by the $P_j$'s. The entry in the intersection of 
row $j$ and column $j'$ equals the multiplicity of $P_j$
as a summand of $L(\varpi_i)\otimes_\mathbb{C}P_{j'}$.
In particular, each such entry is a non-negative integer.
Hence the matrix $[x_i]$ can be represented by a 
directed graph, whose vertices are the $P_j$'s and the number
of oriented edges from $P_{j'}$ to $P_j$ equals the
multiplicity of $P_j$
as a summand of $L(\varpi_i)\otimes_\mathbb{C}P_{j'}$.
All this is a straightforward generalization of the 
special cases the Lie algebras $\mathfrak{sl}_2$
and $\mathfrak{sl}_3$ discussed in the previous sections.

\subsection{Regular actions}\label{s5.4}

Our first observation is analogous to 
Propositions~\ref{prop-s3.6-1} and \ref{prop-s4.3-1}.
Recall that an $\mathscr{F}$-module category
$\mathcal{M}$ is {\em admissible} provided that 
it is idempotent split, has finite dimensional morphism
spaces and weak kernels. If $\mathcal{M}$ is admissible,
then the abelianization $\overline{\mathcal{M}}$ of 
$\mathcal{M}$ is defined as a category whose objects are 
diagrams $X\to Y$ over $\mathcal{M}$ and morphisms are
equivalence classes of solid commutative diagrams
\begin{displaymath}
\xymatrix{
X\ar[rr]\ar[d]&&Y\ar@{.>}[dll]_b\ar[d]^{a}\\
X'\ar[rr]_c&&Y'\\
}
\end{displaymath}
where the equivalence is generated by the relation that
a solid diagram is equivalent to $0$ provided that 
the morphism $a$ can be factorized as $cb$, for some $b$.
The category $\overline{\mathcal{M}}$ is an 
$\mathscr{F}$-module category via the component-wise action.

\begin{theorem}\label{thm-s5.4-1}
Let $\mathcal{M}$ be an admissible simple transitive $\mathscr{F}$-module category
such that $[\mathcal{M}]_\oplus$ is isomorphic to $[\mathscr{F}]_\oplus$
as an $\mathbf{Gr}(\mathscr{F})$-module. Then
$\mathcal{M}$ is equivalent to ${}_\mathscr{F}\mathscr{F}$
as an $\mathscr{F}$-module category.
\end{theorem}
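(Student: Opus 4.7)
The plan is to realize the claimed equivalence as the functor ``act on a distinguished generator.'' First, I use the hypothesis $[\mathcal{M}]_\oplus \cong [\mathscr{F}]_\oplus$ to single out a distinguished indecomposable $P_0 \in \mathcal{M}$. Because $[\mathscr{F}]_\oplus$ is cyclic as a $\mathbf{Gr}(\mathscr{F})$-module with generator $[\mathbb{C}]_\oplus$, the image of $[\mathbb{C}]_\oplus$ in $[\mathcal{M}]_\oplus$ is the class of a unique indecomposable of $\mathcal{M}$, which I call $P_0$. Under the identification of both split Grothendieck groups with $\mathbb{Z}[x_1,\ldots,x_n]$, this sets up a bijection $\lambda \leftrightarrow P_\lambda$ between $\mathfrak{h}^*_\mathrm{idom}$ and the isomorphism classes of indecomposables of $\mathcal{M}$, with $[P_\lambda]_\oplus$ corresponding to $[L(\lambda)]_\oplus$.

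Next, I define the $\mathscr{F}$-module category functor $\Phi : {}_\mathscr{F}\mathscr{F} \to \mathcal{M}$ by $\Phi(V) = V \cdot P_0$, with the coherence isomorphisms $\Phi(V \otimes W) \cong V \cdot \Phi(W)$ coming from the associativity of the $\mathscr{F}$-action. From the Grothendieck-group isomorphism one reads off that $V \cdot P_0$ has the same Krull--Schmidt multiplicities in $\mathcal{M}$ as $V$ has in $\mathscr{F}$; in particular $\Phi(L(\lambda)) \cong P_\lambda$ is indecomposable, and these exhaust the indecomposables of $\mathcal{M}$, giving essential surjectivity together with a bijection on indecomposables. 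Using the biadjointness of $V$ and $V^*$ (which transports to the action on $\mathcal{M}$), full faithfulness of $\Phi$ reduces by the standard hom-adjunction to showing that for every $V \in \mathscr{F}$ the map
\begin{displaymath}
\mathrm{Hom}_{\mathscr{F}}(\mathbb{C}, V) \longrightarrow \mathrm{Hom}_{\mathcal{M}}(P_0, V \cdot P_0)
\end{displaymath}
induced by $\Phi$ is a bijection. Since $\mathrm{Hom}_{\mathscr{F}}(\mathbb{C}, L(\lambda)) = \delta_{\lambda,0}\mathbb{C}$, this in turn amounts to the assertions
\begin{displaymath}
\mathrm{End}_{\mathcal{M}}(P_0) = \mathbb{C} \qquad \text{and} \qquad \mathrm{Hom}_{\mathcal{M}}(P_0, P_\lambda) = 0 \text{ for all } \lambda \neq 0.
\end{displaymath}

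The main obstacle will be this last Hom-computation, as it is not a formal consequence of the Grothendieck-group hypothesis. My plan is to combine admissibility (which permits passage to the abelianization $\overline{\mathcal{M}}$ and identifies $P_0$ with the projective cover of a simple object $L_0$) with simple transitivity (which forbids proper $\mathscr{F}$-invariant ideals in $\mathcal{M}$). Writing $d_\lambda := \dim \mathrm{Hom}_{\mathcal{M}}(P_0, P_\lambda)$, biadjointness converts every $\dim \mathrm{Hom}_{\mathcal{M}}(P_\mu, P_\nu)$ into the linear combination $\sum_\kappa [L(\mu^*) \otimes L(\nu) : L(\kappa)]\, d_\kappa$, so the whole hom-structure of $\mathcal{M}$ is controlled by the sequence $(d_\lambda)$. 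A nonzero element of $\mathrm{rad}\,\mathrm{End}_{\mathcal{M}}(P_0)$, or a nonzero morphism $P_0 \to P_\lambda$ with $\lambda \neq 0$, would generate a proper $\mathscr{F}$-invariant ideal in $\mathcal{M}$ (using that the $P_\mu$ are obtained from $P_0$ by the $\mathscr{F}$-action, hence propagate any such morphism); simple transitivity then forces $d_0 = 1$ and $d_\lambda = 0$ for $\lambda \neq 0$. Once this vanishing is established, the functor $\Phi$ constructed above becomes the required equivalence of $\mathscr{F}$-module categories.
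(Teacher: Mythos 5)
Your overall architecture is the same as the paper's final step: single out the indecomposable $P_0$ corresponding to the monoidal unit, map ${}_\mathscr{F}\mathscr{F}\to\mathcal{M}$ by acting on $P_0$, get essential surjectivity and indecomposability of $V\cdot P_0$ from the Grothendieck-group hypothesis, and reduce full faithfulness via biadjunction to the statement that $\mathrm{End}_{\mathcal{M}}(P_0)=\mathbb{C}$ and $\mathrm{Hom}_{\mathcal{M}}(P_0,P_\lambda)=0$ for $\lambda\neq 0$. That reduction is correct. The gap is in how you close it: you assert that a nonzero radical endomorphism of $P_0$, or a nonzero map $P_0\to P_\lambda$ with $\lambda\neq 0$, ``would generate a proper $\mathscr{F}$-invariant ideal,'' and then invoke simple transitivity. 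The $\mathscr{F}$-invariant ideal generated by such a morphism $\varphi$ is certainly nonzero, so simple transitivity forces it to be \emph{all} of $\mathcal{M}$ --- which is only a contradiction if you can show the ideal is proper, i.e.\ never contains an identity morphism. That is not formal: an additive functor between Krull--Schmidt categories need not preserve the radical, so some $F(\varphi)$ could a priori admit a splitting $\beta\circ F(\varphi)\circ\alpha=\mathrm{id}_{P_\mu}$, and nothing in your $d_\lambda$-bookkeeping rules this out. Properness of this ideal is exactly the hard content of the theorem, and your proposal leaves it unproved.

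The paper supplies precisely this missing step by first proving that $\mathcal{M}$ is semi-simple. Concretely: passing to the abelianization $\overline{\mathcal{M}}$ (this is where admissibility is used), it compares the split-multiplicity matrices $[F]$ on the indecomposables $M_q$ with the composition-multiplicity matrices $\llbracket F\rrbracket$ on their simple tops $N_q$; since $\mathscr{F}$ is semi-simple and every object has a biadjoint dual, \cite[Lemma~8]{AM} gives $[F]=[F^*]^{t}=\llbracket F\rrbracket$. Combined with the fact that every simple of $\overline{\mathcal{M}}$ has the form $F(N_{q_0})$ with $M_{q_0}\cong I$, this shows that $\mathscr{F}$ preserves semi-simple objects and that $F(\mathrm{Rad}\,M_q)=\mathrm{Rad}\,F(M_q)$; hence the radical of $\mathcal{M}$ is an $\mathscr{F}$-invariant ideal and vanishes by simple transitivity. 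Only after that does the Yoneda functor become an equivalence. If you want to salvage your write-up, you should replace the one-sentence ideal argument by this (or an equivalent) proof that the action preserves radicals; as it stands, the crucial inequality $d_\lambda=\delta_{\lambda,0}$ is asserted rather than proved.
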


\begin{proof}
The category $\mathscr{F}$ contains a distinguished indecomposable object,
namely, the identity object. Let $I\in \mathcal{M}$ be the indecomposable
object of $\mathcal{M}$ that corresponds to this identity object under the
isomorphism between $[\mathcal{M}]_\oplus$ and $[\mathscr{F}]_\oplus$,
as $\mathbf{Gr}(\mathscr{F})$-modules.

Then, for any indecomposable object $F\in \mathscr{F}$, the object
$F(I)$ is also indecomposable due to the combination of the facts that 
this is true in $\mathscr{F}$ and that $[\mathcal{M}]_\oplus$ and $[\mathscr{F}]_\oplus$
are isomorphic  as an $\mathbf{Gr}(\mathscr{F})$-module.

Consider the abelianization $\overline{\mathcal{M}}$ of $\mathcal{M}$,
which is, naturally, an $\mathscr{F}$-module category. Let
$\{M_q\,:\,q\in Q\}$ be a complete and irredundant list of 
representatives of the isomorphism classes of indecomposable
objects in $\mathcal{M}$. For each $q$, denote by 
$N_q$ the simple top of $M_q$, considered as an object of 
$\overline{\mathcal{M}}$. Then, for any
$F\in \mathscr{F}$, we have the matrix $[F]$ which records the
direct summand multiplicities $[F(M_q):M_p]$. We also have 
the matrix $\llbracket F\rrbracket$ which records the
composition multiplicities $[F(N_q):N_p]$. 

Since  $\mathscr{F}$ is semi-simple, by \cite[Lemma~8]{AM}
applied to ${}_\mathscr{F}\mathscr{F}$, we have 
that, for any $F\in \mathscr{F}$, the matrix $[F]$ is transposed to 
$[F^*]$. By the same argument applied to $\mathcal{M}$, we have
that $[F]$ is transposed to 
$\llbracket F^*\rrbracket$. Hence $[F]=\llbracket F\rrbracket$.

First, we claim that the action of $\mathscr{F}$ leaves the category
of semi-simple objects in $\overline{\mathcal{M}}$ invariant. 
Let $q_0\in Q$ be such that $M_{q_0}\cong I$.
Each simple in $\overline{\mathcal{M}}$ is of the form
$F(N_{q_0})$, for some $F\in \mathscr{F}$ (this is true because of the
isomorphism between between $[\mathcal{M}]_\oplus$ and 
$[\mathscr{F}]_\oplus$ and the fact that such a claim is obviously true in 
${}_\mathscr{F}\mathscr{F}$). Given $G\in \mathscr{F}$,
we have $G(F(I))\cong (G\circ F)(I)$ which is semi-simple 
since $\mathscr{F}$ is semi-simple. This proves our claim.

Next, we claim that the radical of $\overline{\mathcal{M}}$ is
$\mathscr{F}$-invariant (and hence is zero due to simple
transitivity). Indeed, applying $F$ to the short exact sequence
\begin{displaymath}
0\to \mathrm{Rad}(M_q)\to M_q\to N_q\to 0, 
\end{displaymath}
we get the exact sequence
\begin{displaymath}
0\to F(\mathrm{Rad}(M_q))\to F(M_q)\to F(N_q)\to 0.
\end{displaymath}
By the previous paragraph, the top of $F(M_q)$ is isomorphic
to $F(N_q)$ which implies that $F(\mathrm{Rad}(M_q))$ coincides with
$\mathrm{Rad}(F(M_q))$. This establishes our claim. In particular,
$\overline{\mathcal{M}}\cong \mathcal{M}$, so $\mathcal{M}$
is semi-simple.

Now, consider the Yoneda map from ${}_\mathscr{F}\mathscr{F}$
to $\mathcal{M}$ that sends $\mathbf{1}$ to $I$. It is 
a homomorphism of $\mathscr{F}$-module categories by constructions.
As we already
established above,
it sends indecomposable objects to indecomposable objects.
Since $\overline{\mathcal{M}}\cong \mathcal{M}$, it is 
an equivalence of categories.  This completes the proof.
\end{proof}

\subsection{Projective functors}\label{s5.3}

The action of the monoidal category $\mathscr{F}$ on 
$\mathfrak{g}$-modules is closely related to the notion of
projective functor, introduced in \cite{BG}.

We denote by $\mathcal{Z}$ the full subcategory of 
the category of all finitely generated $\mathfrak{g}$-modules
that consists of all objects on which the center $Z(\mathfrak{g})$
of the universal enveloping algebra $U(\mathfrak{g})$ of
$\mathfrak{g}$ acts locally finitely. Given a central character $\chi$,
denote by $\mathcal{Z}_{{}_\chi}$ the full subcategory of 
$\mathcal{Z}$ consisting of all objects
on which the kernel of $\chi$ acts locally nilpotently.
Then the category $\mathcal{Z}$ decomposes into a direct sum of 
the subcategories $\mathcal{Z}_{{}_\chi}$, taken over all $\chi$. 
The category $\mathcal{Z}$ is invariant under the usual action of 
the monoidal category $\mathscr{F}$ on all $\mathfrak{g}$-modules.

An endofunctor of $\mathcal{Z}$ is called a {\em projective functor} 
provided that it is isomorphic to a direct summand of the functor of
tensoring with some finite dimensional $\mathfrak{g}$-module. This
notion was introduced in \cite{BG}, where indecomposable projective
functors were classified. It turns out that  indecomposable projective 
are in bijection with the orbits of the Weyl group $W$, with respect to the dot-action, 
on pairs $(\lambda,\mu)\in(\mathfrak{h}^*)^2$, where 
$\lambda-\mu\in\Lambda$. Each orbit of this form contains at least 
one pair $(\lambda,\mu)$ with the properties that
\begin{itemize}
\item the weight $\lambda$ is dominant with respect to its integral Weyl group;
\item the weight $\mu$ is anti-dominant with respect to the stabilizer of 
$\lambda$ (for the dot-action).
\end{itemize}
A pair  $(\lambda,\mu)$ satisfying these conditions is called {\em proper}. 
The  indecomposable projective functor corresponding to a pair 
$(\lambda,\mu)$ as above is denoted $\theta_{\lambda,\mu}$.

The relevance of projective functors in our case stems from the 
classical property that any simple $\mathfrak{g}$-module has a central
character, see \cite[Proposition~2.6.8]{Di}. Consequently, 
for a simple $\mathfrak{g}$-module $L$, the category 
$\mathrm{add}(\mathscr{F}\cdot L)$ is a subcategory of 
$\mathcal{Z}$ and hence the action of $\mathscr{F}$ on 
$\mathrm{add}(\mathscr{F}\cdot L)$ can be studied using projective functors.

\subsection{Generic blocks}\label{s5.5}

Let $\lambda\in\mathfrak{h}^*$ be a weight. For this $\lambda$, we denote by 
$\chi_{{}_\lambda}$ the central character of the Verma module $\Delta(\lambda)$
with highest weight $\lambda$. We also denote by $\Lambda$ the set of all
{\em integral weights}, that is, the set of all weights which appear in 
finite dimensional $\mathfrak{g}$-modules.

We will say that  $\lambda$ is {\em generic}, provided that, for any 
$\mu,\nu\in\Lambda$ with $\mu\neq\nu$, the central characters $\chi_{{}_{\lambda+\mu}}$ 
and $\chi_{{}_{\lambda+\nu}}$ are different. Note that the condition of 
being generic can be described, for elements of $\mathfrak{h}^*$,
as the conjunction of a countable set of polynomial inequalities.
In particular, the Lebesgue measure of the set of all non-generic elements 
equals zero. In this sense, almost all weights are generic.
We will say that a central character is {\em generic} provided that it is 
of the form $\chi_{{}_{\lambda}}$, for a generic $\lambda$.

If $\lambda$ is generic and $\mu,\nu\in\Lambda$, then there
is a unique, up to isomorphism, indecomposable projective functor 
from $\mathcal{Z}_{{}_{\chi_{{}_{\lambda+\mu}}}}$ to 
$\mathcal{Z}_{{}_{\chi_{{}_{\lambda+\nu}}}}$. Namely, this functor is
$\theta_{\lambda+\mu,\lambda+\nu}$ and it is 
an equivalence of categories with inverse
$\theta_{\lambda+\nu,\lambda+\mu}$.

\begin{theorem}\label{thm-s5.5-1}
Let $L$ be a simple $\mathfrak{g}$-module with a generic central 
character $\chi_{{}_{\lambda}}$.
Then we have the following:
\begin{enumerate}[$($a$)$]
\item\label{thm-s5.5-1.1} The category $\mathrm{add}(\mathscr{F}\cdot L)$
is semi-simple. 
\item\label{thm-s5.5-1.2} Up to isomorphism, the simple objects of 
$\mathrm{add}(\mathscr{F}\cdot L)$ are in bijection with 
elements in $\Lambda$: for $\mu\in \Lambda$, the corresponding simple
object is $\theta_{\lambda,\lambda+\mu}(L)$.
\item\label{thm-s5.5-1.3} As an $\mathscr{F}$-module category, the
category $\mathrm{add}(\mathscr{F}\cdot L)$ is simple transitive.
\item\label{thm-s5.5-1.4} The $\mathbf{Gr}(\mathscr{F})$-module
$[\mathrm{add}(\mathscr{F}\cdot L)]_\oplus$ does not depend on
$L$, up to isomorphism, and has the following description: for any $M\in \mathscr{F}$
and $\mu,\nu\in\Lambda$, the multiplicity of 
$\theta_{\lambda,\lambda+\nu}(L)$ as a summand 
(equivalently, subquotient) of 
$M\otimes_{\mathbb{C}}\theta_{\lambda,\lambda+\mu}(L)$
equals $\dim M_{\nu-\mu}$.
\end{enumerate}
\end{theorem}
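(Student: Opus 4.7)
The plan is to exploit the classification of projective functors from \cite{BG}, combined with the defining property of generic $\lambda$: for $\mu,\nu\in\Lambda$, the indecomposable projective functor $\theta_{\lambda+\mu,\lambda+\nu}$ from $\mathcal{Z}_{\chi_{\lambda+\mu}}$ to $\mathcal{Z}_{\chi_{\lambda+\nu}}$ is unique and is an equivalence of categories with inverse $\theta_{\lambda+\nu,\lambda+\mu}$. The essential computational input is the standard block-decomposition of the functor $M\otimes{-}$ for $M\in\mathscr{F}$: restricted to $\mathcal{Z}_{\chi_{\lambda+\mu}}$, it decomposes according to the central characters of the target blocks, and by the BG classification together with genericity, the component landing in $\mathcal{Z}_{\chi_{\lambda+\mu+\nu}}$ is isomorphic to $\theta_{\lambda+\mu,\lambda+\mu+\nu}^{\oplus\dim M_\nu}$. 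Hence for every $K\in\mathcal{Z}_{\chi_{\lambda+\mu}}$ I have
\begin{displaymath}
M\otimes_\mathbb{C}K\;\cong\;\bigoplus_{\nu\in\Lambda}\theta_{\lambda+\mu,\lambda+\mu+\nu}(K)^{\oplus\dim M_\nu}.
\end{displaymath}

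Specializing to $K=L$ and $\mu=0$ gives $M\otimes L\cong\bigoplus_{\nu\in\Lambda}\theta_{\lambda,\lambda+\nu}(L)^{\oplus\dim M_\nu}$. Since $\theta_{\lambda,\lambda+\nu}$ is an equivalence in the generic setting, each $\theta_{\lambda,\lambda+\nu}(L)$ is simple; and distinct values of $\nu$ yield distinct central characters $\chi_{\lambda+\nu}$ (by genericity), so these simples are pairwise non-isomorphic. This immediately yields parts (a) and (b): every object of $\mathrm{add}(\mathscr{F}\cdot L)$ is a finite direct sum of the simples $\theta_{\lambda,\lambda+\nu}(L)$ for $\nu\in\Lambda$, and the list is complete and irredundant.

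For part (d), I apply the general decomposition to $K=\theta_{\lambda,\lambda+\mu}(L)$ and use the composition rule $\theta_{\lambda+\mu,\lambda+\mu+\nu}\circ\theta_{\lambda,\lambda+\mu}\cong\theta_{\lambda,\lambda+\mu+\nu}$. This rule holds in the generic case because both sides are indecomposable projective functors between the same pair of blocks, and such a functor is unique up to isomorphism. Reindexing $\eta=\mu+\nu$ gives the multiplicity of $\theta_{\lambda,\lambda+\eta}(L)$ in $M\otimes\theta_{\lambda,\lambda+\mu}(L)$ as $\dim M_{\eta-\mu}$, which depends only on $M$ and on the difference $\eta-\mu$, independently of $L$ or $\lambda$; this is the asserted description of the $\mathbf{Gr}(\mathscr{F})$-module structure on $[\mathrm{add}(\mathscr{F}\cdot L)]_\oplus$. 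Part (c) then follows: given any two indecomposables $\theta_{\lambda,\lambda+\mu}(L)$ and $\theta_{\lambda,\lambda+\eta}(L)$, one can pick $M\in\mathscr{F}$ with $\dim M_{\eta-\mu}>0$ (every integral weight is a weight of some finite-dimensional module), and then the latter is a summand of $M$ applied to the former. This single-orbit structure, together with semi-simplicity, leaves no room for a non-trivial proper $\mathscr{F}$-invariant ideal, so the action is simple transitive.

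The main technical checkpoint is the block-decomposition formula and the composition rule, both of which are consequences of the BG classification, essentially instances of the principle that for generic $\lambda$ an indecomposable projective functor between two blocks $\mathcal{Z}_{\chi_{\lambda+\mu}}\to\mathcal{Z}_{\chi_{\lambda+\nu}}$ is determined up to isomorphism by its source and target. I do not anticipate any serious obstacle beyond the careful bookkeeping in verifying these two facts in the generic regime; once they are in place, all four assertions follow by bookkeeping from the displayed decomposition.
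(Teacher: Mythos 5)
Your argument follows essentially the same route as the paper: genericity makes every indecomposable projective functor between the relevant blocks an equivalence, which yields semi-simplicity, the parametrization of the simples, transitivity via the inverse functor $\theta_{\lambda+\mu,\lambda}$, and the multiplicity formula, so the proposal is correct. The one point you defer as a ``checkpoint'' --- that the component of $M\otimes_{\mathbb{C}}{-}$ landing in $\mathcal{Z}_{\chi_{\lambda+\mu+\nu}}$ occurs with multiplicity exactly $\dim M_\nu$ --- is not a formal consequence of the uniqueness of the indecomposable projective functor between two generic blocks (uniqueness constrains which indecomposables appear, not how often); the paper obtains this count from Kostant's computation \cite[Corollary~5.5]{Ko1}, in effect the standard filtration of $M\otimes_{\mathbb{C}}\Delta(\lambda)$, and you would need to invoke the same input to close your argument.
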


\begin{proof}
Since any indecomposable projective functor between the blocks of 
$\mathcal{Z}$ corresponding to  generic central characters is
an equivalence, we have that, for any $M\in \mathscr{F}$, the module
$M\otimes_{\mathbb{C}}L$ is semi-simple. This implies Claim~\eqref{thm-s5.5-1.1}.
Claim~\eqref{thm-s5.5-1.2} follows directly from the classification of 
indecomposable projective functor between the blocks of 
$\mathcal{Z}$ corresponding to  generic central characters.

Since we now know that the underlying category of 
$\mathrm{add}(\mathscr{F}\cdot L)$ is semi-simple, to prove its simple
transitivity, as an $\mathscr{F}$-module category, we just need
to prove its transitivity. For this, it is enough to show that,
for any $\mu\in \Lambda$, the module $L$ belongs to 
$\mathrm{add}(\mathscr{F}\cdot \theta_{\lambda,\lambda+\mu}(L))$.
We have $L\cong\theta_{\lambda+\mu,\lambda}(\theta_{\lambda,\lambda+\mu}(L))$
as $\theta_{\lambda+\mu,\lambda}$ is an equivalence inverse to 
$\theta_{\lambda,\lambda+\mu}$. This implies Claim~\eqref{thm-s5.5-1.3}.

To prove Claim~\eqref{thm-s5.5-1.4}, it is enough to show that, for any
$M\in \mathscr{F}$ and $\mu\in\Lambda$, the multiplicity 
of $\theta_{\lambda,\lambda+\mu}$ in the endofunctor $M\otimes_{\mathbb{C}}{}_-$
of $\mathrm{add}(\mathscr{F}\cdot L)$
equals $\dim M_\mu$. This follows directly from
\cite[Corollary~5.5]{Ko1}. This completes the proof.
\end{proof}

\begin{remark}
In the case of $\mathfrak{sl}_2$, the generic 
combinatorics is described by the $A_\infty^\infty$ diagram.
In the case of $\mathfrak{sl}_3$, the generic 
combinatorics is described by the first diagram in the
last row in Figure~\ref{fig1}
(and the corresponding diagram in Figure~\ref{fig2}).
\end{remark}

\subsection{Expectations and further directions}\label{s5.6}

We call a module over $\mathbf{Gr}(\mathscr{F})$ realizable if it is
isomorphic to $[\mathcal{M}]_\oplus$, for some transitive 
subcategory of $\mathrm{add}(\mathscr{F}\cdot L)$, where 
$L$ is a simple $\mathfrak{g}$-module.

\begin{conjecture}\label{conj-s5.6-1}
There are only finitely many  realizable
$\mathbf{Gr}(\mathscr{F})$-modules, up to isomorphism.
\end{conjecture}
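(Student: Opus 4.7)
The plan is to parametrize realizable modules by finite combinatorial data extracted from the central character of the generating simple module $L$, then to show that the $\mathbf{Gr}(\mathscr{F})$-module structure on $[\mathrm{add}(\mathscr{F}\cdot L)]_\oplus$ depends only on this data. Throughout I would rely heavily on the machinery of projective functors recalled in Subsection~\ref{s5.3}: since every simple $\mathfrak{g}$-module $L$ admits a central character $\chi_{{}_\lambda}$, the category $\mathrm{add}(\mathscr{F}\cdot L)$ lies inside $\mathcal{Z}$, the action of $\mathscr{F}$ is realized by tensoring with finite dimensional modules, and by \cite{BG} each indecomposable summand appearing in $\mathrm{add}(\mathscr{F}\cdot L)$ has the form $\theta_{\lambda+\mu,\lambda+\nu}(L)$ for some proper pair $(\lambda+\mu,\lambda+\nu)$ with $\mu,\nu\in\Lambda$.

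The next step is to introduce an equivalence relation on choices of $\lambda$ reflecting: (i) the conjugacy class in $W$ of the integral Weyl group $W_{{}_{\lambda+\mu}}$ for every $\mu\in\Lambda$, and (ii) the conjugacy class in $W_{{}_{\lambda+\mu}}$ of the dot-stabilizer of $\lambda+\mu$. Since $W$ has only finitely many parabolic subgroups up to conjugacy and the translation action of $\Lambda$ on the set $\mathfrak{h}^*/W_{\mathrm{aff}}\text{-dot}$ has a fundamental domain inside one alcove, the full tower of data (i)--(ii) reduces to a finite packet. The classification of indecomposable projective functors, together with their composition and adjunction rules, should then depend only on this packet; this already recovers Theorem~\ref{thm-s5.5-1} for the generic stratum and reproduces the $\mathfrak{sl}_2$ and $\mathfrak{sl}_3$ lists from Sections~\ref{s3} and \ref{s4}.

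The third and decisive step would be to show that, for two simple modules $L,L'$ whose central characters lie in the same equivalence class above, the $\mathbf{Gr}(\mathscr{F})$-modules $[\mathrm{add}(\mathscr{F}\cdot L)]_\oplus$ and $[\mathrm{add}(\mathscr{F}\cdot L')]_\oplus$ are isomorphic, and similarly for any transitive subquotient. This can be attempted by identifying indecomposable objects via the labels $\theta_{\lambda+\mu,\lambda+\nu}(L)\leftrightarrow \theta_{\lambda'+\mu,\lambda'+\nu}(L')$ and verifying that the multiplicity of $\theta_{\lambda+\mu,\lambda+\nu}(L)$ in $L(\varpi_i)\otimes_{\mathbb{C}}\theta_{\lambda+\mu',\lambda+\nu'}(L)$ can be read off purely from the Weyl-group packet, for instance by an appropriate extension of \cite[Corollary~5.5]{Ko1} to singular and non-integral blocks. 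Once this is established, the set of realizable modules is sandwiched between finite lists of subquotients of finitely many $\mathbf{Gr}(\mathscr{F})$-modules, which yields the conjecture.

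The main obstacle I expect is precisely this third step in the non-generic case, where $L$ need not be a highest weight module. For modules such as Whittaker or Gelfand-Zeitlin modules, or the induced modules appearing in Subsection~\ref{s3.5}, the combinatorics of $\mathrm{add}(\mathscr{F}\cdot L)$ could a priori depend on finer invariants of $L$ than its central character, for example on its annihilator in $U(\mathfrak{g})$ or on some asymptotic data. The resolution likely goes through Duflo's theorem and the classification of primitive ideals by Weyl group data, reducing such finer invariants themselves to a finite Weyl-group-theoretic list; but fitting Whittaker-type behavior, where the action is controlled by a Lie subalgebra rather than a central character, into the same uniform framework is, to our mind, the key technical difficulty that would need to be overcome.
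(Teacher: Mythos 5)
The statement you are addressing is stated in the paper as Conjecture~\ref{conj-s5.6-1}, and the authors explicitly write that they ``do not see how to prove it''; there is no proof in the paper to compare against, so your proposal has to be judged as a strategy for an open problem. It is honest about where the difficulty lies, but the decisive third step is not merely technically incomplete: as formulated it cannot work. You propose to attach to $L$ a finite ``packet'' extracted from its central character $\chi_{{}_{\lambda}}$ (the integral Weyl groups and dot-stabilizers along $\lambda+\Lambda$) and to show that $[\mathrm{add}(\mathscr{F}\cdot L)]_\oplus$ depends only on that packet. But the central character does not determine this $\mathbf{Gr}(\mathscr{F})$-module. Already for $\mathfrak{sl}_2$ the paper gives a counterexample to your reduction: $L(0)$ and $L(-2)$ have the same central character (hence identical packets in your sense), yet $\Gamma_{L(0)}$ is a single component of type $A_\infty$ while, as noted at the end of Subsection~\ref{s3.3}, $\Gamma_{L(-2)}$ has two components, of types $A_\infty$ and $C_\infty$; likewise a simple Whittaker module sharing a central character with a finite dimensional simple module produces yet another combinatorics by Theorem~\ref{thm-s3.3} and Proposition~\ref{prop-s3.6-1}. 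A related inaccuracy in your second step: for a general simple $L$ the object $\theta_{\lambda+\mu,\lambda+\nu}(L)$ need not be indecomposable or nonzero, so the indecomposables of $\mathrm{add}(\mathscr{F}\cdot L)$ are not labelled by proper pairs in the way you assume; this is precisely why the generic case (Theorem~\ref{thm-s5.5-1}) is so much easier than the general one.

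Your final paragraph correctly identifies the escape route (replace the central character by finer invariants of $L$, e.g.\ its annihilator, and use the classification of primitive ideals to keep the list finite), but that is exactly where the open problem sits rather than a routine extension of \cite[Corollary~5.5]{Ko1}: it is not known that the annihilator determines $[\mathrm{add}(\mathscr{F}\cdot L)]_\oplus$ either, and the paper itself points out that the finiteness result of \cite{MMMTZ} applies only block by block, for a fixed central character, whereas the conjecture quantifies over all central characters and all simple modules simultaneously. In its present form the proposal therefore reduces the conjecture to a statement that is at least as hard as the conjecture itself.
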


We strongly believe that Conjecture~\ref{conj-s5.6-1} is true.
In particular, it is true for $\mathfrak{g}=\mathfrak{sl}_2$
and $\mathfrak{g}=\mathfrak{sl}_3$, as explained above.
However, at the moment we do not see how to prove it.
With or without Conjecture~\ref{conj-s5.6-1}, the problem to
classify all realizable $\mathbf{Gr}(\mathscr{F})$-modules, up
to isomorphism, seems to be very natural and interesting.
This would be the first step towards classification of 
simple $\mathscr{F}$-module categories, up to equivalence.
Looking at our $\mathfrak{sl}_2$-results, we expect the latter
classification be much more difficult than the former.

For a fixed central character $\chi$, the main result of 
\cite{MMMTZ} implies that, up to equivalence, there are only 
finitely many simple transitive finitary module categories
over the monoidal category of projective endofunctors of 
$\mathcal{Z}_\chi$. This, of course, also means that 
there are only finitely many corresponding combinatorial shadows.
However,  Conjecture~\ref{conj-s5.6-1} is about all $\chi$
at the same time and hence the results of \cite{MMMTZ} are
not directly applicable to prove this conjecture.

Study of the categories of the form $\mathrm{add}(\mathscr{F}\cdot L)$
leads to a natural equivalence relation on the set $\mathrm{Irr}(\mathfrak{g})$
of the isomorphism classes of simple $\mathfrak{g}$-modules: 
two simple $\mathfrak{g}$-modules $L$ and $L'$ are said to be {\em equivalent}
provided that 
\begin{displaymath}
\mathrm{add}(\mathscr{F}\cdot L)=
\mathrm{add}(\mathscr{F}\cdot L'),
\end{displaymath}
where $=$ really means the equality, not just
an equivalences of $\mathscr{F}$-module categories. This is closely
related to the partial pre-order $\triangleright$ on $\mathrm{Irr}(\mathfrak{g})$
introduced in \cite{MMM}: we have $L\triangleright L'$ if and only if 
there is a finite dimensional $\mathfrak{g}$-module $F$ such that
the module $F\otimes_\mathbb{C}L$ surjects onto $L'$. Understanding properties of
these (pre-)orders seems to be an essential step in this theory and
has further potential applications. 

The main focus of \cite{MMM} was on understanding the socle of
the modules of the form $F\otimes_\mathbb{C}L$ as above. This would
provide more essential information on the structure of the
indecomposable objects in the category $\mathrm{add}(\mathscr{F}\cdot L)$.
The main results of \cite{MMM} asserts that the socle in question is
a finite length module, under the additional assumption that 
$L$ is holonomic, that is, has minimal possible Gelfand-Kirillov dimension
for its annihilator. It type $A$, a similar results is known for all $L$,
see \cite{CCM}. In both cases, the proofs are heavily based on 
the main result of \cite{MMMTZ} (in type $A$, on the earlier special 
case  of it appearing in \cite{MM5}). This is additional evidence that
understanding combinatorial properties of Lie-theoretic action can be
really helpful for studying purely Lie-theoretic and representation 
theoretic properties of Lie algebra modules.

\end{document}